\def\op{\operatorname}
\def\mmod{\kern-1pt\operatorname{-mod}}
\newtheorem{Thm}{Theorem}[section]
\newtheorem{Lem}[Thm]{Lemma}
\newtheorem{Cor}[Thm]{Corollary}
\newtheorem{Prop}[Thm]{Proposition}
\newtheorem{Def}[Thm]{Definition}
\newtheorem{Rem}[Thm]{Remark}
\newtheorem{Conj}[Thm]{Conjecture}
\numberwithin{equation}{section}
\begin{document}

\title[The Principal Representations of Reductive Groups ]{The Principal Representations of Reductive Algebraic Groups with Frobenius Maps}

\author{Junbin Dong}
\address{Institute of Mathematical Sciences, ShanghaiTech University, 393 Middle Huaxia Road, Pudong, Shanghai 201210, PR China.}
\email{dongjunbin1990@126.com}


\subjclass[2010]{20C07, 20G05}

\date{January 6, 2021}

\keywords{Principal representation, highest weight category, quiver algebra.}

\begin{abstract}
We introduce the principal representation category $\mathscr{O}({\bf G})$  of reductive algebraic groups with Frobenius maps and put forward a conjecture that this category is a highest weight category.
When $\Bbbk$ is complex field $\mathbb{C}$, we provide some evidences of this conjecture. We also study certain kind of bound quiver algebras whose representations are related to the principal representation category  $\mathscr{O}({\bf G})$ .
\end{abstract}

\maketitle

\section*{Introduction}
Let ${\bf G}$ be a connected reductive algebraic  group defined over the finite field $\mathbb{F}_q$ with the standard Forbenius map $F$ induced by the automorphism $x\mapsto x^q$ on $\bar{\mathbb{F}}_q$. Let  $\Bbbk$ be a field. According to  a result of Borel and Tits \cite[Theorem 10.3 and Corollary 10.4]{BT}, we know that except the trivial representation, all other irreducible representations of $\Bbbk {\bf G}$ are infinite-dimensional if ${\bf G}$ is a semisimple algebraic group over $\bar{\mathbb{F}}_q$ and $\Bbbk $ is infinite with $\op{char}\Bbbk\neq \op{char} \bar{\mathbb{F}}_q$. So it seems to be difficult to study the abstract representations of ${\bf G}$. However in \cite{X}, Nanhua Xi studied the abstract representations of ${\bf G}$ over ${\bf \Bbbk}$ by taking the direct limit of the finite-dimensional representations of $G_{q^a}$ and got many interesting results.
Later, motivated by Xi's idea, the structure of the permutation module $\Bbbk [{\bf G}/{\bf B}]$ (${\bf B}$ is a Borel subgroup of ${\bf G}$ ) was studied in  \cite{CD1} for the cross characteristic case and \cite{CD2} for the defining characteristic case.
The paper \cite{CD3} studied the general abstract induced module $\mathbb{M}(\theta)=\Bbbk{\bf G}\otimes_{\Bbbk{\bf B}}{\Bbbk}_\theta$ for any field $\Bbbk$ with $\op{char}\Bbbk\neq \op{char} \bar{\mathbb{F}}_q$ or $\Bbbk=\bar{\mathbb{F}}_q$, where $\bf T$ is a maximal splitting torus contained in a $F$-stable Borel subgroup $\bf B$ and $\theta \in \widehat{\bf T}$ (the character group of $\bf T$). The  induced module $\mathbb{M}(\theta)$  has a composition series (of finite length) if $\op{char}\Bbbk\neq \op{char} \bar{\mathbb{F}}_q$. In the case $\Bbbk=\bar{\mathbb{F}}_q$ and $\theta$ is a rational character, $\mathbb{M}(\theta)$ has such composition series if and only if $\theta$ is antidominant (see \cite{CD3} for details). In both cases, the composition factors has the form $E(\theta)_J$ with $J\subset I(\theta)$
(see Section 1 for explicit definition).

The construction of $\mathbb{M}(\theta)$  is similar to the Verma module in the representations of semisimple complex Lie algebras. Thus  motivated by  the famous BGG category $\mathscr{O}$,  this paper introduces a category $\mathscr{O}({\bf G})$ called principal representation category to study the abstract representations of  ${\bf G}$. It is the full subcategory of $\Bbbk{\bf G}$-Mod such that any object $M$ in $\mathscr{O}({\bf G})$ is of finite length and its composition factors are $E(\theta)_J$ for some $\theta \in \widehat{\bf T}$ and $J\subset I(\theta)$.   The BGG category is a highest weight category (see \cite[Example 3.3 (c)]{CPS}).  Thus in the study of category $\mathscr{O}({\bf G})$, we put forward a conjecture that this category  has enough injectives. So it is a highest weight category (in the sense  of \cite{CPS}) when we assume that Conjecture \ref{conjecture} is valid.  By the property of highest weight category,  $\mathscr{O}({\bf G})$ has a decomposition $\mathscr{O}({\bf G})= \displaystyle \bigoplus_{\theta \in \widehat{\bf T}} \mathscr{O}({\bf G})_\theta$, where  $\mathscr{O}({\bf G})_\theta$ is the  full subcategory of $\mathscr{O}({\bf G})$ containing the objects whose subquotients are $E(\theta)_J$ for a fixed character $\theta$ of $\bf T$.
The weight set  of $\mathscr{O}({\bf G})_\theta$ is finite and therefore there exists a  finite-dimensional quasi-hereditary algebra $A_\theta$ such that $\mathscr{O}({\bf G})_\theta$ is equivalent to the right $A_\theta$-modules. Thus all the indecomposable projective objects in  $\mathscr{O}({\bf G})$ are given in Section 3. For each $\theta \in \widehat{\bf T}$, the algebra $A_\theta$ is isomorphic to a bound quiver algebra $\mathscr{A}_n$ (which is defined in Section 4) when $|I(\theta)|=n$.
When $|I(\theta)|=1~\text{or}~2$, this algebra is of finite representation type which means that the number of the indecomposable modules up to isomorphic is finite.
By the equivalence of $\mathscr{O}({\bf G})_\theta$ and the the right $A_\theta$-modules, we give all the indecomposable modules of $\mathscr{O}({\bf G})$ when the rank of ${\bf G}$ is 1 or 2.
However when $n\geq 3$, the algebra $\mathscr{A}_n$ is of tame representation type. Moreover,
$\mathscr{A}_n$ is of wild type when $n\geq 4$.

\medskip
This paper is organized as follows:  Section 1 contains some preliminaries and  we also introduce the principal representation category $\mathscr{O}({\bf G})$ in this section.
In Section 2, we study the injective objects in
$\mathscr{O}({\bf G})$ and give some evidences to show that $\mathscr{O}({\bf G})$ may has enough injectives when $\Bbbk = \mathbb{C}$. Under the assumption that this conjecture is true, we show that $\mathscr{O}({\bf G})$ is
a highest weight category in Section 3. The algebra structure of $A_\theta$ is also  studied in this section. Section 4 is devoted to study  the algebras $\mathscr{A}_n$ which is isomorphic to $A_\theta$ when $|I(\theta)|=n$.

\bigskip
\noindent{\bf Acknowledgements}\ \  The author is grateful to Prof. Nanhua Xi for his constant encouragement and guidance.  The author would also like to thank  Prof. Ming Fang, Prof. Zongzhu Lin, Prof. Toshiaki Shoji and Dr. Xiaoyu Chen for their helpful discussions and comments.  Part of this work was done during the author's visit to Institute of Mathematics, Chinese Academy of Sciences. The author is grateful to the institute for hospitality.

%
%

\section{Principal Representation Category }
As in the introduction, let ${\bf G}$ be a connected reductive algebraic  group defined over $\mathbb{F}_q$ with the standard Frobenius map $F$ (e.g., $GL_n(\bar{\mathbb{F}}_q)$,  $SL_n(\bar{\mathbb{F}}_q)$, $SO_{2n}(\bar{\mathbb{F}}_q)$, $SO_{2n+1}(\bar{\mathbb{F}}_q)$, $Sp_{2n}(\bar{\mathbb{F}}_q)$,$\cdots$). Let ${\bf B}$ be an $F$-stable Borel subgroup, and ${\bf T}$ be an $F$-stable maximal torus contained in ${\bf B}$, and ${\bf U}=R_u({\bf B})$ be the ($F$-stable) unipotent radical of ${\bf B}$. We denote by $\Phi=\Phi({\bf G};{\bf T})$ the corresponding root system, and by $\Phi^+$ (resp. $\Phi^-$) the set of positive (resp. negative) roots determined by ${\bf B}$. Let $W=N_{\bf G}({\bf T})/{\bf T}$ be the corresponding Weyl group. One denotes by $\Delta=\{\alpha_i\mid i\in I\}$ the set of simple roots and $S=\{s_i\mid i\in I\}$ the corresponding simple reflections in $W$. For each $w\in W$, let $\dot{w}$ be one representative in $N_{\bf G}({\bf T})$. For any $w\in W$, let ${\bf U}_w$ (resp. ${\bf U}_w'$) be the subgroup of ${\bf U}$ generated by all ${\bf U}_\alpha$ (the root subgroup of $\alpha\in\Phi^+$) with $w\alpha\in\Phi^-$ (resp. $w\alpha\in\Phi^+$). The multiplication map ${\bf U}_w\times{\bf U}_w'\rightarrow{\bf U}$ is a bijection (see \cite[Proposition 2.5.12]{Ca}).

\medskip

 In this paper we fix an algebraically closed field $\Bbbk$ such that $\op{char}\Bbbk\neq \op{char} \bar{\mathbb{F}}_q$. Denote by
 $$\widehat{\bf T}=\{\theta \mid \theta : {\bf T}\rightarrow \Bbbk^* \ \text{is a group homomorphism}\}$$ the character group of ${\bf T}$ over ${\Bbbk}$. Each $\theta\in\widehat{\bf T}$ is regarded as a character of ${\bf B}$ by the homomorphism ${\bf B}\rightarrow{\bf T}$. Let ${\Bbbk}_\theta$ be the corresponding ${\bf B}$-module. The induced module $\mathbb{M}(\theta)=\Bbbk{\bf G}\otimes_{\Bbbk{\bf B}}{\Bbbk}_\theta$ was studied in the paper \cite{CD3} and all the composition factors of $\mathbb{M}(\theta)$ are given in \cite{CD3}. For convenience, we recall the main results here.

Let ${\bf 1}_{\theta}$ be a nonzero element in ${\Bbbk}_\theta$. We write $x{\bf 1}_{\theta}:=x\otimes{\bf 1}_{\theta}\in \mathbb{M}(\theta)$ for short.
It is clear  that $\displaystyle \mathbb{M}(\theta)=\sum_{w\in W}\Bbbk {\bf U}_{w^{-1}}\dot{w}{\bf 1}_{\theta}$ and the set $$\{ u \dot{w}{\bf 1}_{\theta} \mid  w\in W, u\in {\bf U}_{w^{-1}}\} $$ forms a basis of $\mathbb{M}(\theta)$ using Bruhat decomposition. As \cite[Proposition 2.2]{CD3} showed, the $\Bbbk {\bf G}$-module $\mathbb{M}(\theta)$ is indecomposable.

For each $i \in I$, let ${\bf G}_i$ be the subgroup of $\bf G$ generated by ${\bf U}_{\alpha_i}, {\bf U}_{-\alpha_i}$ and we set ${\bf T}_i= {\bf T}\cap {\bf G}_i$. For $\theta\in\widehat{\bf T}$, define the subset $I(\theta)$ of $I$ by $$I(\theta)=\{i\in I \mid \theta| _{{\bf T}_i} \ \text {is trivial}\}.$$
For $J\subset I(\theta)$, let ${\bf G}_J$ be the subgroup of $\bf G$ generated by ${\bf G}_i$, $i\in J$. We choose a representative $\dot{w}\in {\bf G}_J$ for each $w\in W_J$. Thus, the element $w{\bf 1}_\theta:=\dot{w}{\bf 1}_\theta$  $(w\in W_J)$ is well defined.

For $J\subset I(\theta)$, we set
$$\eta(\theta)_J=\sum_{w\in W_J}(-1)^{\ell(w)}w{\bf 1}_{\theta},$$
and let $\Delta(\theta)_J=\displaystyle \sum_{w\in W}\Bbbk {\bf U}\dot{w}\eta(\theta)_J$, which is a submodule of $\mathbb{M}(\theta)$. In particular, we have $\Delta(\theta)_J=\Bbbk{\bf G}\eta(\theta)_J$.
We define
$$E(\theta)_J=\Delta(\theta)_J/\Delta(\theta)_J',$$
where $\Delta(\theta)_J'$ is the sum of all $\Delta(\theta)_K$ with $J\subsetneq K\subset I(\theta)$.

For any subset $J\subset I$ and $K\subset I(\theta)$, we set
$$
\aligned
X_J &\ =\{x\in W\mid x~\op{has~minimal~length~in}~xW_J\};\\
Z_K &\ =\{w\in X_K \mid \mathscr{R}(ww_K)\subset K\cup (I\backslash I(\theta))\}.
\endaligned
$$
Then by \cite[Proposition 2.7]{CD3}, we have $$E(\theta)_J=\sum_{w\in Z_J}\Bbbk {\bf U}_{w_Jw^{-1}}\dot{w}C(\theta)_J.$$
where $C(\theta)_J$ is the image of $\eta(\theta)_J$ in $E(\theta)_J$. In particular, the following set  $$\{u\dot{w}C(\theta)_J \mid w\in Z_J, u\in {\bf U}_{w_Jw^{-1}} \}$$ forms a basis of $E(\theta)_J$.

The following theorem (see \cite[Theorem 3.1]{CD3}) gives all the composition factors of $\mathbb{M}(\theta)$ explicitly.

\begin{Thm}\label{EJ}
All the $\Bbbk {\bf G}$-modules $E(\theta)_J$ $(J\subset I(\theta))$ are irreducible and pairwise non-isomorphic. In particular, $\mathbb{M}(\theta)$ has exactly $2^{|I(\theta)|}$ composition factors with each of multiplicity one.
\end{Thm}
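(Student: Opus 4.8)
The plan is to establish the two assertions — irreducibility of each $E(\theta)_J$ and their pairwise non-isomorphism — and then deduce the count of composition factors from a Jordan–Hölder argument on the chain of submodules $\Delta(\theta)_J$ of $\mathbb{M}(\theta)$. The central combinatorial object is the explicit basis $\{u\dot{w}C(\theta)_J \mid w\in Z_J,\ u\in {\bf U}_{w_Jw^{-1}}\}$ quoted above from \cite[Proposition 2.7]{CD3}; almost everything will be read off from how ${\bf G}$, and especially ${\bf B}$ and the $\dot{s}_i$, act on these basis vectors.

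For irreducibility, I would take a nonzero submodule $N\subset E(\theta)_J$ and show $N=E(\theta)_J$. First, using that ${\bf U}$ is generated by root subgroups and acts ``unipotently'' on the finite-dimensional spans of the basis above, a standard averaging/fixed-point argument (the unipotent group ${\bf U}$ acting on a nonzero space over a field of characteristic $\neq \op{char}\bar{\mathbb F}_q$, after passing to a suitable finite subgroup ${\bf U}_{q^a}$ whose order is invertible in $\Bbbk$) produces a nonzero ${\bf U}$-fixed vector in $N$. Since the ${\bf T}$-weight spaces are one-dimensional on the relevant pieces and ${\bf T}$ normalizes ${\bf U}$, one can further arrange a nonzero vector that is a ${\bf T}$-weight vector and ${\bf U}$-fixed, i.e.\ (up to scalar) of the shape $\dot{w}C(\theta)_J$ for some $w\in Z_J$, or more precisely lies in the line through the ``top'' vector $C(\theta)_J$. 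Then I would show that by applying the $\dot{s}_i$ for $i\in I(\theta)$ and elements of ${\bf U}$ one climbs back up to $C(\theta)_J$ and then generates all basis vectors $u\dot{w}C(\theta)_J$; the quotient by $\Delta(\theta)'_J$ is exactly designed so that the ``extra'' relations $\eta(\theta)_K$ ($K\supsetneq J$) that would obstruct this are killed. Concretely the key computation is the action of $\dot{s}_i$ on $\eta(\theta)_J$: for $i\in J$ it negates it, for $i\in I(\theta)\setminus J$ it relates $\eta(\theta)_J$ to $\eta(\theta)_{J\cup\{i\}}$ modulo ${\bf U}$-translates (hence zero in $E(\theta)_J$), and for $i\notin I(\theta)$ the nontriviality of $\theta|_{{\bf T}_i}$ makes the $SL_2$- or $PGL_2$-calculation behave like an induced module with no new fixed vectors — this pins down that the cyclic module generated by $C(\theta)_J$ is all of $E(\theta)_J$ and has no proper submodule.

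For pairwise non-isomorphism, I would distinguish the modules by an invariant visible in the basis: for $E(\theta)_J$ the set of ${\bf T}$-weights occurring, together with the ${\bf B}$-socle, determines both $\theta$ (the weight of the ${\bf U}$-fixed line is essentially $\theta$ twisted by $w_J$, and the $\op{char}$ assumption prevents different $\theta$'s from colliding) and $J$ (the stabilizer structure / the length of the longest element acting trivially on the fixed line, i.e.\ $w_J$, recovers $J$ since $J\subset I(\theta)$). So an isomorphism $E(\theta)_J\cong E(\theta')_{J'}$ forces $\theta=\theta'$ and $W_J=W_{J'}$, hence $J=J'$.

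Finally, for the count: the submodules $\Delta(\theta)_J$ ($J\subset I(\theta)$) together with the $\Delta(\theta)'_J$ give, after choosing a refinement of the poset $2^{I(\theta)}$ ordered by inclusion to a total order, a filtration of $\mathbb{M}(\theta)$ whose successive quotients are precisely the $E(\theta)_J$ (this is the defining property of $E(\theta)_J=\Delta(\theta)_J/\Delta(\theta)'_J$, once one checks $\Delta(\theta)_\emptyset=\mathbb{M}(\theta)$ and that $\sum_{K\supsetneq J}\Delta(\theta)_K$ behaves as expected — a short support argument on basis vectors). Since each quotient is irreducible by the first part and there are $|2^{I(\theta)}|=2^{|I(\theta)|}$ of them, each distinct by the second part, Jordan–Hölder gives that $\mathbb{M}(\theta)$ has exactly $2^{|I(\theta)|}$ composition factors, each of multiplicity one. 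I expect the main obstacle to be the irreducibility step — specifically, carrying out the ${\bf U}$-fixed-vector argument rigorously in this infinite-dimensional, direct-limit setting, and controlling the $\dot{s}_i$-action on $\eta(\theta)_J$ for $i\notin I(\theta)$ so that no unwanted submodule appears; the non-isomorphism and counting steps should then be comparatively formal given the explicit bases.
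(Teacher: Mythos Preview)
The paper does not prove this theorem at all: it is quoted verbatim as \cite[Theorem 3.1]{CD3}, and all of the supporting infrastructure (the basis of $E(\theta)_J$, the sets $Z_J$, etc.) is likewise imported from \cite{CD3}. So there is no argument in the present paper to compare your proposal against.

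That said, your outline is broadly the right shape for how such a result is proved in \cite{CD3}, but there is a genuine gap in the irreducibility step. You write that the ${\bf U}$-fixed ${\bf T}$-weight vector you extract ``lies in the line through the `top' vector $C(\theta)_J$''. This is not what happens: the space of ${\bf U}$-fixed vectors in $E(\theta)_J$ is typically larger than a line (it contains $\dot{w}C(\theta)_J$ for various $w\in Z_J$, for instance), so your fixed-point argument only hands you \emph{some} vector in a multi-dimensional space of highest-weight-type vectors. The real work, which you gesture at but do not carry out, is to show that \emph{any} nonzero ${\bf U}$-fixed ${\bf T}$-eigenvector generates all of $E(\theta)_J$; this requires a careful analysis of the $\dot{s}_i$-action to move between the different $\dot{w}C(\theta)_J$'s, and the combinatorics of $Z_J$ and of $\mathscr{R}(ww_J)$ enter in an essential way. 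Your description of the $\dot{s}_i$-action on $\eta(\theta)_J$ for $i\in J$ and $i\in I(\theta)\setminus J$ is correct, but for $i\notin I(\theta)$ the assertion that the rank-one calculation ``behaves like an induced module with no new fixed vectors'' is not enough on its own to rule out proper submodules --- you need to combine it with the description of how the ${\bf U}_{w_Jw^{-1}}\dot{w}C(\theta)_J$ pieces are linked.

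Your non-isomorphism and counting arguments are essentially fine once irreducibility is in hand; distinguishing the $E(\theta)_J$ by their ${\bf B}$-socle (or, equivalently, by the $W_J$-alternating behaviour of $C(\theta)_J$) does recover $J$, and the Jordan--H\"older step is formal.
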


\medskip
The irreducible $\Bbbk {\bf G}$-modules $E(\theta)_J$ can also be realized by parabolic induction.
Let $\theta\in\widehat{\bf T}$ and $K\subset I(\theta)$. Since $\theta|_{{\bf T}_i}$ is trivial for all $i\in K$, it induces a character (still denoted by $\theta$) of $\overline{\bf T}={\bf T}/{\bf T}\cap[{\bf L}_K,{\bf L}_K]$. Therefore, $\theta$ is regarded as a character of ${\bf L}_K$ by the homomorphism ${\bf L}_K\rightarrow\overline{\bf T}$ (with the kernel $[{\bf L}_K,{\bf L}_K]$), and hence as a character of ${\bf P}_K$ by letting ${\bf U}_K$ acts trivially. We set $\mathbb{M}(\theta, K):=\Bbbk{\bf G}\otimes_{\Bbbk{\bf P}_K}\theta$.  Let ${\bf 1}_{\theta, K}$ be a nonzero element in the one-dimensional module $\Bbbk_\theta$ associated to $\theta$. We abbreviate $x{\bf 1}_{\theta, K}:=x\otimes{\bf 1}_{\theta, K} \in \mathbb{M}(\theta, K)$ as before.

Now set $\mathscr{R}(w)=\{i\in I\mid ws_i< w \}$ and $Y_K= \{ w\in W\mid \mathscr{R}(w)\subset I\backslash K \}$. Then by the same argument of \cite[Lemma 6.2]{D}, we have
$$\mathbb{M}(\theta, K)=\sum_{w\in Y_K}\Bbbk {\bf U}_{w^{-1}}\dot{w}{\bf 1}_{\theta, K} $$
and moreover, the following set $$\{ u \dot{w}{\bf 1}_{\theta, K}  \mid  w\in Y_K, u\in {\bf U}_{w^{-1}}\}$$ is a basis of
$\mathbb{M}(\theta, K)$.  One has that $\mathbb{M}(\theta, K)$ is
a indecomposable $\Bbbk {\bf G}$-module. Indeed, by the same discussion of  \cite[Proposition 2.2]{CD3}, we consider the endomorphism algebra of $\mathbb{M}(\theta, K)$ and have
$$\op{End}_{\bf G}(\mathbb{M}(\theta, K))\cong \op{Hom}_{{\bf P}_K}(\Bbbk_{\theta}, \mathbb{M}(\theta, K))\cong \Bbbk$$ which implies that  $\mathbb{M}(\theta, K)$ is indecomposable. Using the same proof of \cite[Theorem 6.3]{D} and \cite[Corollary 3.8]{CD1}, we have the following proposition.
\begin{Prop}\label{Parabolic}
Let $\theta\in\widehat{\bf T}$. For $K \subset I(\theta)$, we have $$\mathbb{M}(\theta, K) \cong \displaystyle \mathbb{M}(\theta)\Big{/} \sum_{s\in {K}}\Delta(\theta)_{\{s\}}.$$ Thus all the composition factors of $\mathbb{M}(\theta, K)$ are $E(\theta)_J$ with $J\subset I(\theta)\backslash K $.
\end{Prop}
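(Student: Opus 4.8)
The plan is to realize $\mathbb{M}(\theta, K)$ as the quotient of $\mathbb{M}(\theta)$ claimed, via transitivity of induction, and then to read off its composition factors from Theorem~\ref{EJ}. Since $K\subset I(\theta)$, the character $\theta$ of ${\bf B}$ extends to the character of ${\bf P}_K$ still denoted $\theta$, whose restriction to ${\bf B}$ is again $\theta$; hence the one-dimensional ${\bf P}_K$-module $\Bbbk_\theta$ is a quotient of $\Bbbk{\bf P}_K\otimes_{\Bbbk{\bf B}}\Bbbk_\theta$, say with kernel $N_0$. Applying the functor $\Bbbk{\bf G}\otimes_{\Bbbk{\bf P}_K}(-)$ — exact because $\Bbbk{\bf G}$ is free as a right $\Bbbk{\bf P}_K$-module — to $0\to N_0\to\Bbbk{\bf P}_K\otimes_{\Bbbk{\bf B}}\Bbbk_\theta\to\Bbbk_\theta\to 0$, and using $\Bbbk{\bf G}\otimes_{\Bbbk{\bf P}_K}(\Bbbk{\bf P}_K\otimes_{\Bbbk{\bf B}}\Bbbk_\theta)\cong\mathbb{M}(\theta)$, presents $\mathbb{M}(\theta, K)$ as $\mathbb{M}(\theta)/N$, where $N$ is the $\Bbbk{\bf G}$-submodule of $\mathbb{M}(\theta)$ generated by $N_0$. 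Since $\dot s\in{\bf G}_s\subset[{\bf L}_K,{\bf L}_K]$ acts trivially on $\Bbbk_\theta$, for each $s\in K$ the element $\eta(\theta)_{\{s\}}={\bf 1}_\theta-\dot s{\bf 1}_\theta$ lies in $N_0$; therefore $\sum_{s\in K}\Delta(\theta)_{\{s\}}=\sum_{s\in K}\Bbbk{\bf G}\,\eta(\theta)_{\{s\}}\subseteq N$, and we obtain a surjection $\mathbb{M}(\theta)/\sum_{s\in K}\Delta(\theta)_{\{s\}}\twoheadrightarrow\mathbb{M}(\theta, K)$.

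Next I would prove that $N=\sum_{s\in K}\Delta(\theta)_{\{s\}}$, reducing to the Levi subgroup ${\bf L}_K$. The ${\bf U}_K$-factors cancel, so $\Bbbk{\bf P}_K\otimes_{\Bbbk{\bf B}}\Bbbk_\theta$ is the inflation to ${\bf P}_K$ of $\mathbb{M}^{{\bf L}_K}(\theta):=\Bbbk{\bf L}_K\otimes_{\Bbbk({\bf B}\cap{\bf L}_K)}\Bbbk_\theta$, and under this identification $N_0$ corresponds to a ${\bf L}_K$-submodule with one-dimensional quotient; so it suffices to check that $M':=\sum_{s\in K}\Bbbk{\bf L}_K\,\eta(\theta)_{\{s\}}$ also has one-dimensional quotient in $\mathbb{M}^{{\bf L}_K}(\theta)$, for then $M'\subseteq N_0$ forces $M'=N_0$, whence $N=\sum_{s\in K}\Delta(\theta)_{\{s\}}$. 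Working inside ${\bf L}_K$ (where the role of $I(\theta)$ is played by $K$), one has $\eta(\theta)_\emptyset={\bf 1}_\theta$ so $\Delta(\theta)_\emptyset=\mathbb{M}^{{\bf L}_K}(\theta)$, and the submodules $\Delta(\theta)_J$ are nested, $\Delta(\theta)_{L'}\subseteq\Delta(\theta)_L$ for $L\subseteq L'$, because $\eta(\theta)_{L'}\in\Bbbk{\bf L}_K\,\eta(\theta)_L$ (write $W_{L'}$ as the product of the minimal-length coset representatives for $W_{L'}/W_L$ with $W_L$, lengths adding). Hence $M'=\Delta(\theta)_\emptyset'$, so $\mathbb{M}^{{\bf L}_K}(\theta)/M'=E(\theta)^{{\bf L}_K}_\emptyset$; and the basis of $E(\theta)_J$ recalled above specializes here to $Z_\emptyset=\{e\}$ and $w_\emptyset=e$, so $E(\theta)^{{\bf L}_K}_\emptyset$ is spanned by $C(\theta)_\emptyset$ alone and is one-dimensional, as needed. (Alternatively one can argue as in \cite[Theorem~6.3]{D}: the $\Bbbk{\bf G}$-map ${\bf 1}_\theta\mapsto{\bf 1}_{\theta, K}$ is an epimorphism $\mathbb{M}(\theta)\to\mathbb{M}(\theta, K)$ killing $\sum_{s\in K}\Delta(\theta)_{\{s\}}$, and using the relations $g{\bf 1}_\theta\equiv g\dot s{\bf 1}_\theta$ modulo this submodule, together with the root-subgroup factorization ${\bf U}_{w^{-1}}={\bf U}_{x^{-1}}\,\dot x\,{\bf U}^{{\bf L}_K}_{y^{-1}}\,\dot x^{-1}$ for $w=xy$ with $x\in X_K$, $y\in W_K$, $\ell(w)=\ell(x)+\ell(y)$, one shows that $\mathbb{M}(\theta)/\sum_{s\in K}\Delta(\theta)_{\{s\}}$ is spanned by the images of the $u\dot x{\bf 1}_\theta$ with $x\in X_K=Y_K$ and $u\in{\bf U}_{x^{-1}}$; these map onto the displayed basis of $\mathbb{M}(\theta, K)$, and an epimorphism carrying a spanning set to a basis is an isomorphism.)

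For the composition factors, combining the nesting $\Delta(\theta)_{L'}\subseteq\Delta(\theta)_L$ (for $L\subseteq L'\subseteq I(\theta)$, now in ${\bf G}$) with Theorem~\ref{EJ}, a downward induction on $|I(\theta)\setminus J|$ — using $\Delta(\theta)_J/\Delta(\theta)_J'=E(\theta)_J$ and $\Delta(\theta)_J'=\sum_{J\subsetneq L}\Delta(\theta)_L$ — shows that $\Delta(\theta)_J$ has composition factors exactly $\{E(\theta)_L\mid J\subseteq L\subseteq I(\theta)\}$, each with multiplicity one. Since $\mathbb{M}(\theta)$ is multiplicity-free, the composition factors of $\sum_{s\in K}\Delta(\theta)_{\{s\}}$ are the union $\{E(\theta)_L\mid L\cap K\neq\emptyset\}$, so those of $\mathbb{M}(\theta, K)\cong\mathbb{M}(\theta)/\sum_{s\in K}\Delta(\theta)_{\{s\}}$ are precisely the $E(\theta)_J$ with $J\subseteq I(\theta)\setminus K$.

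I expect the only non-formal ingredient to be the identity $N=\sum_{s\in K}\Delta(\theta)_{\{s\}}$ — equivalently the one-dimensionality of $E(\theta)^{{\bf L}_K}_\emptyset$, or, in the direct approach, the root-subgroup factorization of ${\bf U}_{w^{-1}}$ along a minimal representative of $wW_K$; the passage to ${\bf L}_K$, the nesting of the $\Delta(\theta)_J$'s, and the bookkeeping with composition factors are all routine.
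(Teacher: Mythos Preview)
Your argument is correct. The paper does not actually write out a proof of this proposition: it simply says ``using the same proof of \cite[Theorem~6.3]{D} and \cite[Corollary~3.8]{CD1}''. Your parenthetical alternative --- build the surjection $\mathbb{M}(\theta)\to\mathbb{M}(\theta,K)$, observe it kills each $\eta(\theta)_{\{s\}}$, and then show the induced map on $\mathbb{M}(\theta)\big/\sum_{s\in K}\Delta(\theta)_{\{s\}}$ carries a spanning set indexed by $Y_K=X_K$ bijectively onto the displayed basis of $\mathbb{M}(\theta,K)$ --- is precisely the argument of \cite[Theorem~6.3]{D} that the paper is invoking, and the composition-factor statement is what \cite[Corollary~3.8]{CD1} supplies.

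Your primary route is a genuine, slightly different packaging: instead of a direct basis comparison you use transitivity of induction to identify the kernel with $\Bbbk{\bf G}\otimes_{\Bbbk{\bf P}_K}N_0$, reduce the computation of $N_0$ to the Levi ${\bf L}_K$, and then quote the $E(\theta)_J$-basis (applied inside ${\bf L}_K$, where the role of $I(\theta)$ is $K$) to see that $E(\theta)^{{\bf L}_K}_\emptyset$ is one-dimensional. This is cleaner conceptually --- the only nontrivial input is the structure theory already recalled from \cite{CD3} --- whereas the cited approach trades that for an explicit Bruhat/root-subgroup manipulation. Either way the deduction of the composition factors from Theorem~\ref{EJ} via the nesting $\Delta(\theta)_{L'}\subset\Delta(\theta)_L$ and multiplicity-freeness is exactly as you wrote it.
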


For $J\subset I(\theta)$, set $J'=I(\theta)\backslash J$ and  we denote by $\nabla(\theta)_J= \mathbb{M}(\theta, J')=\Bbbk{\bf G}\otimes_{\Bbbk{\bf P}_{J'}}\Bbbk_\theta$.  Let $E(\theta)_J'$ be the submodule of $\nabla(\theta)_J$ generated by $$D(\theta)_J:=\sum_{w\in W_J}(-1)^{\ell(w)}\dot{w}{\bf 1}_{\theta, J'}.$$
We see that $E(\theta)_J'$ is isomorphic to $E(\theta)_J$ as $\Bbbk {\bf G}$-modules by \cite[Proposition 1.9]{CD3}. Therefore $E(\theta)_J$ can be regarded as the scole of  $\nabla(\theta)_J$.

\medskip

At the end of this section  we introduce a category $\mathscr{O}({\bf G})$ called principal representation category. It is the full subcategory of $\Bbbk{\bf G}$-Mod such that any object $M$ in $\mathscr{O}({\bf G})$ is of finite length and its composition factors are $E(\theta)_J$ for some $\theta \in \widehat{\bf T}$ and $J\subset I(\theta)$. Thus $\mathscr{O}({\bf G})$ is an abelian category.   The category $\mathscr{O}({\bf G})$ is obviously noetherian and artinian by its construction.
By the previous discussion we already have three interesting kinds of modules in $\mathscr{O}({\bf G})$, the irreducible modules $E(\theta)_J$, the  modules
$\Delta(\theta)_J$  and the modules  $\nabla(\theta)_J$. These modules are frequently used in the following sections.

\section{Injective Objects in $\mathscr{O}({\bf G})$ }

In this section, we assume that $\Bbbk$ is an algebraically closed field of characteristic $0$ (e.g., $\Bbbk = \mathbb{C}$). Under this assumption we will consider the injective objects in the principal representation category $\mathscr{O}({\bf G})$.
Firstly, for each $\theta\in\widehat{\bf T}$, the space $\op{Hom}_{\Bbbk {\bf B}}(\Bbbk {\bf G},  \Bbbk_{\theta})$ is a $\Bbbk {\bf G}$-module whose module structure  is given by $$(g \varphi) (x)= \varphi (xg), \ \  \text{where} \ \varphi \in \op{Hom}_{\Bbbk {\bf B}}(\Bbbk {\bf G},  \Bbbk_{\theta}),  g\in {\bf G}, x\in \Bbbk {\bf G}.$$
Since this space $\op{Hom}_{\Bbbk {\bf B}}(\Bbbk {\bf G},  \Bbbk_{\theta})$ has uncountable dimension, it is not an object in $\mathscr{O}({\bf G})$ generally. However we have
\begin{Lem} \label{injective} For  each $\theta\in\widehat{\bf T}$, $\op{Hom}_{\Bbbk {\bf B}}(\Bbbk {\bf G},  \Bbbk_{\theta})$ is an injective $\Bbbk {\bf G}$-module.
\end{Lem}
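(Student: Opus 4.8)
The plan is to show that $\op{Hom}_{\Bbbk{\bf B}}(\Bbbk{\bf G},\Bbbk_\theta)$ is injective in $\Bbbk{\bf G}\mmod$ by exhibiting it as a coinduced module and invoking the adjunction between restriction and coinduction. Recall that for any subgroup ${\bf H}\leq{\bf G}$ and any $\Bbbk{\bf H}$-module $N$, the coinduced module $\op{Coind}_{\bf H}^{\bf G}N=\op{Hom}_{\Bbbk{\bf H}}(\Bbbk{\bf G},N)$ satisfies $\op{Hom}_{\Bbbk{\bf G}}(M,\op{Hom}_{\Bbbk{\bf H}}(\Bbbk{\bf G},N))\cong\op{Hom}_{\Bbbk{\bf H}}(M,N)$ naturally in $M$; this is just tensor-hom adjunction together with $\Bbbk{\bf G}\otimes_{\Bbbk{\bf G}}M\cong M$. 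Hence the functor $M\mapsto\op{Hom}_{\Bbbk{\bf G}}(M,\op{Hom}_{\Bbbk{\bf B}}(\Bbbk{\bf G},\Bbbk_\theta))$ is the composite of the restriction functor $\Bbbk{\bf G}\mmod\to\Bbbk{\bf B}\mmod$ with $\op{Hom}_{\Bbbk{\bf B}}(-,\Bbbk_\theta)$.

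First I would observe that restriction from $\Bbbk{\bf G}$ to $\Bbbk{\bf B}$ is exact (it does nothing to the underlying vector spaces and maps). Second, I would note that $\Bbbk_\theta$ is an injective $\Bbbk{\bf B}$-module: indeed $\Bbbk_\theta$ is one-dimensional over the field $\Bbbk$, the unipotent radical ${\bf U}$ acts trivially on it, and since $\op{char}\Bbbk\neq\op{char}\bar{\mathbb{F}}_q$ the relevant cohomology vanishes — more directly, for the purpose of this lemma it suffices that $\op{Hom}_{\Bbbk{\bf B}}(-,\Bbbk_\theta)$ is exact, which one checks by a short argument (e.g. any short exact sequence of $\Bbbk{\bf B}$-modules stays exact after applying $\op{Hom}_{\Bbbk{\bf B}}(-,\Bbbk_\theta)$ because $\Bbbk_\theta$ is a direct summand of any module containing it in the appropriate sense, or by reducing to $\op{Hom}_{\Bbbk{\bf T}}(-,\Bbbk_\theta)$ and using that $\Bbbk{\bf T}$ is a commutative group algebra over a field of coprime characteristic). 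Composing the two exact functors, $M\mapsto\op{Hom}_{\Bbbk{\bf G}}(M,\op{Hom}_{\Bbbk{\bf B}}(\Bbbk{\bf G},\Bbbk_\theta))$ is exact, which is precisely the assertion that $\op{Hom}_{\Bbbk{\bf B}}(\Bbbk{\bf G},\Bbbk_\theta)$ is an injective $\Bbbk{\bf G}$-module.

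The step I expect to require the most care is the injectivity (equivalently, the relative injectivity) of $\Bbbk_\theta$ as a $\Bbbk{\bf B}$-module — one must be careful that ${\bf B}$ is an infinite group and $\Bbbk{\bf B}$ is not artinian, so ``injective'' cannot be read off from a decomposition argument for finite-dimensional algebras. The cleanest route is to use ${\bf B}={\bf U}\rtimes{\bf T}$: any $\Bbbk{\bf B}$-linear surjection onto a quotient of $\Bbbk_\theta$-containing module splits because (i) $\Bbbk_\theta$ restricted to ${\bf U}$ is trivial and ${\bf U}$-fixed points are exact in our setting via the direct-limit description of ${\bf U}$-modules together with $\op{char}\Bbbk\neq p$, reducing the problem to ${\bf T}$-modules, and (ii) over the commutative group algebra $\Bbbk{\bf T}$ with $\Bbbk$ algebraically closed of coprime characteristic, the one-dimensional module $\Bbbk_\theta$ is a direct summand of any module in which it embeds as the $\theta$-weight space. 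I would spell out exactly this chain of reductions, and then assemble the composite-of-exact-functors argument to conclude.
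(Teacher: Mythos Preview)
Your overall architecture is exactly the paper's: use the coinduction adjunction
\[
\op{Hom}_{\Bbbk{\bf G}}\bigl(M,\op{Hom}_{\Bbbk{\bf B}}(\Bbbk{\bf G},\Bbbk_\theta)\bigr)\cong\op{Hom}_{\Bbbk{\bf B}}(M,\Bbbk_\theta)
\]
to reduce to showing that $\Bbbk_\theta$ is injective as a $\Bbbk{\bf B}$-module, and then conclude. The gap is in your justification of that key step.

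First, you repeatedly work under the hypothesis ``$\op{char}\Bbbk\neq\op{char}\bar{\mathbb{F}}_q$'' only, but the claim that $\Bbbk_\theta$ is injective over $\Bbbk{\bf T}$ is \emph{false} in positive characteristic $\ell\neq p$. The torus ${\bf T}\cong(\bar{\mathbb{F}}_q^{\,*})^r$ contains elements of every order coprime to $p$, in particular of order $\ell$; by a result of Farkas--Snider the trivial $\Bbbk{\bf T}$-module is injective if and only if no element order of ${\bf T}$ vanishes in $\Bbbk$. The paper proves the lemma under the standing assumption of this section that $\op{char}\Bbbk=0$, and the Remark immediately following the lemma points out exactly this obstruction in positive characteristic. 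So your step (ii), as stated for ``coprime characteristic'', is simply wrong.

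Second, even granting $\op{char}\Bbbk=0$, your step (ii) is not an argument: saying ``$\Bbbk_\theta$ is a direct summand of any module in which it embeds as the $\theta$-weight space'' is just a restatement of injectivity, and an arbitrary $\Bbbk{\bf T}$-module need not decompose into weight spaces. The paper handles this cleanly by quoting Farkas--Snider: in characteristic $0$ the group algebra $\Bbbk{\bf B}$ is locally Wedderburn, and over such algebras every simple module is injective. Your reduction through ${\bf U}$-coinvariants (step (i)) is correct and is essentially one ingredient in seeing that $\Bbbk{\bf B}$ is locally Wedderburn here, but you still owe a real proof of injectivity over $\Bbbk{\bf T}$ --- for instance by invoking that ${\bf T}$ is locally finite abelian with all element orders invertible in $\Bbbk$, so that $\Bbbk{\bf T}$ is von Neumann regular and simple modules are injective. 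Without that, the proposal is incomplete.
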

\begin{proof} Using the setting and properties in \cite[Section 2]{FS}, when $\Bbbk$ is an algebraically closed field of characteristic zero, $\Bbbk {\bf B}$ is a locally Wedderburn algebra. Thus by \cite[Lemma 3]{FS}, we see that $\Bbbk_{\theta}$ is an injective $\Bbbk {\bf B}$-module. Therefore  $\op{Hom}_{\Bbbk {\bf B}}(\Bbbk {\bf G},  \Bbbk_{\theta})$ is an injective $\Bbbk {\bf G}$-module for each $\theta\in\widehat{\bf T}$.
\end{proof}

\begin{Rem} According to  in \cite[Theorem 1]{FS},  the trivial $\Bbbk {\bf T}$-module is injective if and only if the order of no elements in ${\bf T}$ vanishes in $\Bbbk$. Thus if $\op{char} \Bbbk > 0$, the trivial $\Bbbk {\bf T}$-module is not injective in general. Then the trivial $\Bbbk {\bf B}$-module is also not injective. However I guess that the $\Bbbk {\bf G}$-module $\op{Hom}_{\Bbbk {\bf B}}(\Bbbk {\bf G},  \Bbbk_{\theta})$  can also be injective. We need to prove it in other methods.

\end{Rem}

\medskip
 Using the Bruhat decomposition, each element $g\in {\bf G}$ has the unique expression  $g=b\dot{w}u$ for some  $w\in W$, $b\in {\bf B}$ and $u\in {\bf U}$.
For $\theta\in\widehat{\bf T}$  and an element $w\in W$,  we consider the function
$\rho_{ w, \theta}\in \op{Hom}_{\Bbbk {\bf B}}(\Bbbk {\bf G},  \Bbbk_{\theta})$ defined by
$$\rho_{ w, \theta}(b\dot{w}'u)=\delta_{w,w'}\theta(b)$$
where $\delta_{w,w'}$ is the kronecker symbol. Now we let $$\mathbb{X}_{\theta}=\sum_{w\in W}\Bbbk {\bf G} \rho_{ w, \theta}$$
which is the $\Bbbk {\bf G}$-submodule generated by $\rho_{ w, \theta}, w\in W$.

The Weyl group $W$ acts naturally on $\widehat{\bf T}$ by
$$(w\cdot \theta ) (t):=\theta^w(t)=\theta(\dot{w}t\dot{w}^{-1})$$
for any $\theta\in\widehat{\bf T}$. Denote by
$W_{\theta}$ the stabilizer of $\theta\in\widehat{\bf T}$. Thus the parabolic subgroup $W_{I(\theta)}$ is a subgroup of $W_{\theta}$. For any $w_1, w_2 \in W$, we see that $\theta^{w_1}= \theta^{w_2} $ if and only if $w_1w_2^{-1} \in W_{\theta}$.

\begin{Lem} \label{generatedmodule}
One has that $\Bbbk {\bf G} \rho_{ w, \theta}\cong \mathbb{M}(\theta^{w})$ as $\Bbbk {\bf G}$-modules for each $w\in W$.

\end{Lem}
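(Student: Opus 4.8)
The plan is to construct an explicit $\Bbbk{\bf G}$-module isomorphism $\mathbb{M}(\theta^w)\to \Bbbk{\bf G}\rho_{w,\theta}$ by sending the cyclic generator ${\bf 1}_{\theta^w}$ to $\rho_{w,\theta}$, and then checking this is well defined and bijective. First I would verify that $\rho_{w,\theta}$ transforms under right multiplication by ${\bf B}$ according to the character $\theta^w$: using the module structure $(g\varphi)(x)=\varphi(xg)$, for $b'\in{\bf B}$ one computes $(b'\rho_{w,\theta})(b\dot w' u)=\rho_{w,\theta}(b\dot w' u b')$. To evaluate this I would rewrite $\dot w' u b'$ in Bruhat normal form; since $u\in{\bf U}$ and $b'\in{\bf B}$, and $\dot w' {\bf B}\dot w'^{-1}\cap {\bf B}$ absorbs the torus part appropriately, the upshot is that $b\dot w' u b' = (b\,{}^{\dot w'}\!t')\dot w'(\text{unipotent})$ where $t'$ is the ${\bf T}$-component of $b'$, so $\rho_{w,\theta}(b\dot w' u b')=\delta_{w,w'}\theta(b)\theta(\dot w' t'\dot w'^{-1})=\delta_{w,w'}\theta(b)\,\theta^{w}(t')$. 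Hence $b'\rho_{w,\theta}=\theta^w(b')\rho_{w,\theta}$, so by the universal property of the induced module there is a well-defined $\Bbbk{\bf G}$-homomorphism $\phi\colon \mathbb{M}(\theta^w)\to\Bbbk{\bf G}\rho_{w,\theta}$ with $\phi(g{\bf 1}_{\theta^w})=g\rho_{w,\theta}$, and it is surjective by the definition of $\Bbbk{\bf G}\rho_{w,\theta}$ as the submodule generated by $\rho_{w,\theta}$ — note here one uses that $\Bbbk{\bf G}\rho_{w,\theta}$ is generated by the single element $\rho_{w,\theta}$, the other $\rho_{w',\theta}$ for $w'\neq w$ playing no role in this particular cyclic submodule.

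The main work is injectivity. Here I would use the explicit basis of $\mathbb{M}(\theta^w)$ recalled in Section 1, namely $\{u\dot v\,{\bf 1}_{\theta^w}\mid v\in W,\ u\in{\bf U}_{v^{-1}}\}$ coming from the Bruhat decomposition, and show that the images $\phi(u\dot v\,{\bf 1}_{\theta^w})=u\dot v\,\rho_{w,\theta}$ are linearly independent in $\op{Hom}_{\Bbbk{\bf B}}(\Bbbk{\bf G},\Bbbk_\theta)$. To do this I would evaluate $(u\dot v\,\rho_{w,\theta})$ on Bruhat cells: $(u\dot v\,\rho_{w,\theta})(b\dot{w}'u')=\rho_{w,\theta}(b\dot w' u' u\dot v)$, and $\dot w' u' u \dot v$ lies in the cell indexed by $w'$ only after one moves $\dot v$ across, so the evaluation is supported on the cell $\dot{w}'$ with $w'=wv^{-1}$ (using $w'\cdot v = w$ at the level of Weyl group components of the Bruhat decomposition). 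Restricting attention to that cell, the function $b\dot w' u'\mapsto \rho_{w,\theta}(b\dot w' u' u\dot v)$ becomes, after the change of variable absorbing $u$ and $\dot v$, a "delta-type" function supported near a specific point of the affine cell ${\bf U}_{(w')^{-1}}$, and different choices of $(v,u)$ give functions supported at genuinely different points or in different cells; a standard support/disjointness argument then forces linear independence. I expect this bookkeeping — tracking exactly where $(u\dot v\,\rho_{w,\theta})$ is supported and why distinct basis elements map to linearly independent functions — to be the main obstacle, since it requires care with the non-commutativity of ${\bf U}$ and with the identification ${\bf U}_{v^{-1}}\dot v \leftrightarrow {\bf U}_{(w')^{-1}}\dot{w'}$.

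An alternative, cleaner route for injectivity, which I would try first, is to compare dimensions cell by cell: both $\mathbb{M}(\theta^w)$ and $\Bbbk{\bf G}\rho_{w,\theta}$ are $\Bbbk{\bf G}$-submodules built from Bruhat data, and one can show directly that $\phi$ restricted to the span of $\{u\dot v\,{\bf 1}_{\theta^w}\mid u\in{\bf U}_{v^{-1}}\}$ for fixed $v$ is injective, then that the images for distinct $v$ are supported on distinct Bruhat cells of ${\bf G}$ and hence their sum is direct. Combined with surjectivity this gives the isomorphism. Finally I would remark that this lemma identifies each cyclic summand-type submodule of $\mathbb{X}_\theta$ with an induced module $\mathbb{M}(\theta^w)$, which is exactly what is needed to analyze the structure of the injective hull $\op{Hom}_{\Bbbk{\bf B}}(\Bbbk{\bf G},\Bbbk_\theta)$ inside $\mathscr{O}({\bf G})$ in the sequel.
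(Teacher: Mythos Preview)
Your proposal is correct in outline but takes a different, more laborious route for injectivity than the paper does. You construct the surjection $\mathbb{M}(\theta^w)\to\Bbbk{\bf G}\rho_{w,\theta}$ exactly as the paper does (the paper phrases it via Frobenius reciprocity, you via a direct check that $b'\rho_{w,\theta}=\theta^w(b')\rho_{w,\theta}$; these are the same thing). For injectivity, however, the paper avoids all cell-by-cell linear-independence bookkeeping by invoking a structural fact already established in Section~1: the induced module $\mathbb{M}(\theta^w)$ has \emph{simple socle}, namely $\Delta(\theta^w)_{I(\theta^w)}=E(\theta^w)_{I(\theta^w)}$. Hence any nonzero $\Bbbk{\bf G}$-homomorphism out of $\mathbb{M}(\theta^w)$ is automatically injective, and one only needs to verify that the socle generator does not die. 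The paper does this with a single evaluation:
\[
\Bigl(\sum_{x\in W_{I(\theta^w)}}(-1)^{\ell(x)}\dot{x}\rho_{w,\theta}\Bigr)(b\dot w)=\sum_{x}(-1)^{\ell(x)}\rho_{w,\theta}(b\dot w\dot x)\neq 0,
\]
since only the $x=e$ term survives. Your approach trades this one-line check for a support analysis of all the functions $u\dot v\,\rho_{w,\theta}$ on Bruhat cells; that can be made to work, but note your claim that $u\dot v\,\rho_{w,\theta}$ is supported on the single cell $w'=wv^{-1}$ is not literally true (Bruhat multiplication $B\dot w'B\cdot B\dot vB$ generally hits a union of cells), so you would need a leading-term/filtration argument rather than a disjoint-support argument. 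The paper's socle trick sidesteps all of this.
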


\begin{proof} Firstly using Frobenius reciprocity we have
$$ \op{Hom}_{\Bbbk {\bf G}} (\mathbb{M}(\theta^w) , \Bbbk {\bf G} \rho_{ w, \theta}) \cong \op{Hom}_{\Bbbk {\bf B}}({\bf 1}_{\theta^w}, \Bbbk {\bf G} \rho_{ w, \theta} ) \cong \Bbbk.$$
Thus it is not difficult to see that $\Bbbk {\bf G} \rho_{ w, \theta}$ is a quotient $\Bbbk {\bf G}$-module of $\mathbb{M}(\theta^w)$.
Since the $\Bbbk {\bf G}$-module $\Delta(\theta)_{I(\theta^w)}$ is the socle of $\mathbb{M}(\theta^w)$, to get $\Bbbk {\bf G} \rho_{ w, \theta}\cong \mathbb{M}(\theta^w)$ as $\Bbbk {\bf G}$-modules, it is enough to show that the element  $$\sum_{x\in W_{I(\theta^w)}}(-1)^{\ell(x)}\dot{x}\rho_{ w, \theta}\ne 0.$$
Indeed, we have
$$\sum_{x\in W_{I(\theta^w)}}(-1)^{\ell(x)}\dot{x}\rho_{ w, \theta} (b\dot{w})=\sum_{x\in W_{I(\theta^w)}}(-1)^{\ell(x)}\rho_{ w, \theta} (b\dot{w} \dot{x}) \ne 0.$$
Thus the lemma is proved. In particular,  we have $\Bbbk {\bf G} \rho_{ e, \theta}\cong \mathbb{M}(\theta)$ as $\Bbbk {\bf G}$-modules, where $e$ is the neutral element of $W$.

\end{proof}

For a fixed $\theta\in\widehat{\bf T}$ and  $w\in W$, let $J_{w,\theta}= \mathscr{R}(w) \cap  I(\theta^w)$. Then $w$ has the unique form $w=x\vartheta_{J_{w,\theta}}$, where $\vartheta_{J_{w,\theta}}$ is the longest element in $W_{J_{w,\theta}}$.  In this case we set
$$\varsigma_{w,\theta}= \sum_{v\in W_{J_{w,\theta}}}\rho_{ xv, \theta} \ \ \text{and} \ \  \mathbb{X}_{w,\theta}= \Bbbk {\bf G} \varsigma_{w,\theta} .$$

\begin{Lem} \label{direct summand}
For each $\theta\in\widehat{\bf T}$ and  $w\in W$, one has that
$$ \mathbb{X}_{w,\theta} \cong \Bbbk{\bf G}\otimes_{\Bbbk{\bf P}_{J_{w,\theta}}}\Bbbk_{\theta^w}$$ as $\Bbbk {\bf G}$-modules. Then we have $ \mathbb{X}_{\theta} \displaystyle \cong \bigoplus_{w\in W}\mathbb{X}_{w,\theta} $ as $\Bbbk {\bf G}$-modules.

\end{Lem}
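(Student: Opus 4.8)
The plan is to establish the first isomorphism $\mathbb{X}_{w,\theta}\cong\Bbbk{\bf G}\otimes_{\Bbbk{\bf P}_{J_{w,\theta}}}\Bbbk_{\theta^w}$ directly from the definition of $\varsigma_{w,\theta}$, and then deduce the direct sum decomposition of $\mathbb{X}_\theta$ by a counting/basis argument. For the first part, write $J=J_{w,\theta}$ and recall $w=x\vartheta_J$ with $x\in X_J$. First I would check that $\varsigma_{w,\theta}=\sum_{v\in W_J}\rho_{xv,\theta}$ is fixed (up to the character $\theta^w=\theta^x$, which agrees with $\theta^{xv}$ for $v\in W_J$ since $W_J\subset W_{I(\theta^w)}\subset W_\theta$ — this needs a small check using $J\subset I(\theta^w)$) by the action of the parabolic ${\bf P}_{J}$: the torus and the Levi ${\bf L}_J$ act through $\theta^w$ because $\theta^w|_{{\bf T}_i}$ is trivial for $i\in J$, and the unipotent radical ${\bf U}_J$ acts trivially on $\varsigma_{w,\theta}$. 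The latter is the heart of the matter and is checked by evaluating $u\varsigma_{w,\theta}$ on Bruhat cells $b\dot{w}'$: one uses that the cosets $\{xv\mid v\in W_J\}$ are exactly $xW_J$ and the combinatorics of minimal/maximal coset representatives to see that summing over $v\in W_J$ exactly symmetrizes away the ${\bf U}_J$-action, analogous to the computation at the end of the proof of Lemma \ref{generatedmodule}. Granting ${\bf P}_{J}$-invariance, Frobenius reciprocity gives a surjection $\Bbbk{\bf G}\otimes_{\Bbbk{\bf P}_{J}}\Bbbk_{\theta^w}\twoheadrightarrow\mathbb{X}_{w,\theta}$, and then I would compare dimensions over the Bruhat cells: using the basis of $\mathbb{M}(\theta^w,\text{-})$ recalled before Proposition \ref{Parabolic} (indexed by $Y_K$ with $K=I\setminus J$... here one must be careful to match $J_{w,\theta}$ with the correct parabolic), the support of $\varsigma_{w,\theta}$ meets each relevant cell in exactly the expected way, forcing the surjection to be an isomorphism.

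For the second statement, the strategy is: the $\Bbbk{\bf G}$-module $\mathbb{X}_\theta$ has the spanning set $\{\rho_{w,\theta}\mid w\in W\}$ together with its ${\bf G}$-translates, and I claim $\mathbb{X}_\theta=\sum_{w}\mathbb{X}_{w,\theta}$ with the sum direct. The inclusion $\sum_w\mathbb{X}_{w,\theta}\subseteq\mathbb{X}_\theta$ is clear since each $\varsigma_{w,\theta}$ is a $\Bbbk$-combination of the $\rho_{\bullet,\theta}$. For the reverse inclusion, I would show each generator $\rho_{w',\theta}$ lies in $\sum_w\mathbb{X}_{w,\theta}$: group the index set $W$ into left cosets $xW_{J}$ for the relevant $J$'s, and invert the (unitriangular, with respect to length) change of basis between $\{\rho_{xv,\theta}\mid v\}$ and the various $\varsigma_{\bullet,\theta}$ obtained from that coset. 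The directness then follows because the $\mathbb{X}_{w,\theta}$ live on disjoint sets of Bruhat cells: by the first part, the basis of $\mathbb{X}_{w,\theta}$ coming from $\Bbbk{\bf G}\otimes_{\Bbbk{\bf P}_{J_{w,\theta}}}\Bbbk_{\theta^w}$ is supported (in the $\op{Hom}_{\Bbbk{\bf B}}(\Bbbk{\bf G},\Bbbk_\theta)$ picture, i.e.\ as functions on $\bf G$) on cells $\bf B\dot{w}'\bf U$ with $w'$ ranging over a set determined by $w$, and these sets partition $W$ as $w$ runs over a transversal. Making the bookkeeping precise — identifying exactly which functions $\rho_{w',\theta}$ occur in $\mathbb{X}_{w,\theta}$ — is what makes the sum both exhaust $\mathbb{X}_\theta$ and be direct.

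The main obstacle I anticipate is the ${\bf U}_{J_{w,\theta}}$-invariance of $\varsigma_{w,\theta}$ and the precise matching of the combinatorial data: one must verify that $J_{w,\theta}=\mathscr{R}(w)\cap I(\theta^w)$ is exactly the set for which the symmetrization $\sum_{v\in W_J}\rho_{xv,\theta}$ becomes ${\bf P}_J$-stable and no larger parabolic works — in other words that $w=x\vartheta_J$ with $x$ minimal in $xW_J$ is forced, and that $\theta^{xv}=\theta^w$ throughout so that all the $\rho_{xv,\theta}$ really take values in the same $\Bbbk_\theta$. This is a delicate interplay between the descent set $\mathscr{R}(w)$, the stabilizer $W_\theta$, and the subgroup $W_{I(\theta^w)}$, and it is the place where the hypotheses on $\theta$ are genuinely used. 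Once this alignment is pinned down, the Frobenius reciprocity argument and the cell-support bookkeeping are routine, following the template of Lemma \ref{generatedmodule} and \cite[Proposition 1.9]{CD3}.
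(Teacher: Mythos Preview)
Your overall strategy matches the paper's: verify that $\varsigma_{w,\theta}$ affords the character $\theta^w$ of ${\bf P}_{J_{w,\theta}}$, obtain a surjection from the parabolic induction, upgrade it to an isomorphism, then show the sum is exhaustive and direct. However, two steps go astray. First, you locate the difficulty in ${\bf P}_J$-equivariance at the wrong place. The unipotent group ${\bf U}$ (hence ${\bf U}_J$) already acts trivially on \emph{each individual} $\rho_{z,\theta}$: right translation by $u'\in{\bf U}$ sends $b\dot{w}'u$ to $b\dot{w}'(uu')$ and leaves $\rho_{z,\theta}(b\dot{w}'u)=\delta_{z,w'}\theta(b)$ unchanged. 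So ${\bf U}_J$-invariance of $\varsigma_{w,\theta}$ is automatic, and the symmetrization over $W_J$ is irrelevant there. What the symmetrization actually buys is invariance under the simple reflections $\dot{s}$ for $s\in J$; the paper checks the key identity $\dot{s}(\rho_{z,\theta}+\rho_{zs,\theta})=\rho_{z,\theta}+\rho_{zs,\theta}$, and this (with ${\bf T}$-equivariance and ${\bf U}$-invariance) yields ${\bf P}_J$-equivariance. After that, rather than a ``dimension comparison over Bruhat cells'' (which is ill-defined for these infinite-dimensional modules), the paper upgrades the surjection to an isomorphism by the socle argument of Lemma~\ref{generatedmodule}: since the parabolic induction has simple socle $E(\theta^w)_{I(\theta^w)\setminus J}$, one need only check that the image of the socle generator is nonzero.

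Second, your directness argument has a genuine gap. The assertion that ``the $\mathbb{X}_{w,\theta}$ live on disjoint sets of Bruhat cells'' fails once you pass from the generator $\varsigma_{w,\theta}$ to arbitrary ${\bf G}$-translates: a nonzero ${\bf G}$-submodule of $\op{Hom}_{\Bbbk{\bf B}}(\Bbbk{\bf G},\Bbbk_\theta)$ is not confined to any proper union of cells. The remark that the supports ``partition $W$ as $w$ runs over a transversal'' is also off, since the decomposition is indexed by all of $W$, not a transversal. The paper's argument is different: it orders $W=\{w_1,\dots,w_n\}$ by length and argues by contradiction. If $\mathbb{X}_{w_k,\theta}\cap\sum_{i<k}\mathbb{X}_{w_i,\theta}\neq0$, then the simple socle of $\mathbb{X}_{w_k,\theta}$ lies in $\sum_{i<k}\mathbb{X}_{w_i,\theta}$; one then produces a point $\dot{w}_k u$ (for suitable $u\in{\bf U}$) at which the socle element is nonzero while every element of $\sum_{i<k}\mathbb{X}_{w_i,\theta}$ vanishes, giving the contradiction. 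This evaluation step is exactly the missing ingredient that a vague ``cell-support bookkeeping'' does not supply.
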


\begin{proof} By the setting of $\varsigma_{w,\theta}$, it  is easy to check that
$t \varsigma_{w,\theta} =\theta^w(t) \varsigma_{w,\theta}$ for any $t\in {\bf T}.$ Moreover for any $z\in W$ and  $s\in J_{z,\theta}$, we have
$$\dot{s}(\rho_{ z, \theta} + \rho_{ zs, \theta} ) =\rho_{ z, \theta}  + \rho_{ zs, \theta}.$$
Thus we get $\dot{s}\varsigma_{w,\theta}=\varsigma_{w,\theta}$ for any $s \in J_{w,\theta}$. Hence
 $ \mathbb{X}_{w,\theta}$ is a quotient $\Bbbk {\bf G}$-module of $\Bbbk{\bf G}\otimes_{\Bbbk{\bf P}_{J_{w,\theta}}}\Bbbk_{\theta^w}$. Noting that $E(\theta^w)_{I(\theta^w)\setminus{J_{w,\theta}}}$ is the socle of $\Bbbk{\bf G}\otimes_{\Bbbk{\bf P}_{J_{w,\theta}}}\Bbbk_{\theta^w}$, by the same discussion as Lemma \ref{generatedmodule}, we get $$ \mathbb{X}_{w,\theta} \cong \Bbbk{\bf G}\otimes_{\Bbbk{\bf P}_{J_{w,\theta}}}\Bbbk_{\theta^w}$$ as $\Bbbk {\bf G}$-modules. Therefore each $\mathbb{X}_{w,\theta} $ is a indecomposable $\Bbbk {\bf G}$-module.

 It is easy to verify that
$ \mathbb{X}_{\theta} = \displaystyle \sum_{w\in W}\mathbb{X}_{w,\theta} $ as $\Bbbk {\bf G}$-modules. Indeed, it is obvious to see that $\displaystyle \sum_{w\in W}\mathbb{X}_{w,\theta} \subseteq \mathbb{X}_{\theta} $.  On the other hand, we can do induction on the length of the elements in $W$ to show that $\displaystyle \rho_{z, \theta} \in \sum_{w\in W}\mathbb{X}_{w,\theta}$ for any $z\in W$. Noting that $\mathbb{X}_{\theta}$ is generated by $\rho_{ w, \theta}$ with $w\in W$. Then we have $ \mathbb{X}_{\theta} = \displaystyle \sum_{w\in W}\mathbb{X}_{w,\theta} $.

Given an order of the elements in $$W=\{w_1, w_2, \dots, w_n\}$$
such that $\ell(w_i)\leq \ell(w_j)$ for $i<j$. In the following we show that
$$\mathbb{X}_{w_k ,\theta}  \cap  (\sum_{i< k}\mathbb{X}_{w_i ,\theta})= 0$$
for any $k=1,2,\dots, n$, which implies that $\displaystyle  \mathbb{X}_{\theta} \cong \bigoplus_{w\in W}\mathbb{X}_{w,\theta} $ as $\Bbbk {\bf G}$-modules. Now suppose that there exists an integer $k$ such that
$$\mathbb{X}_{w_k ,\theta}  \cap  (\sum_{i< k}\mathbb{X}_{w_i ,\theta})\ne  0.$$
For convenience we set $\Gamma= I(\theta^{w_k})\setminus{J_{w_k,\theta}}$. Using $\mathbb{X}_{w_k ,\theta}  \cong \Bbbk{\bf G}\otimes_{\Bbbk{\bf P}_{J_{w_k,\theta}}}\Bbbk_{\theta^{w_k}} $,
one has that $E(\theta^{w_k})_{\Gamma}$ is the socle of  $\mathbb{X}_{w_k ,\theta}$.
 Thus we get
$$  \sum_{x\in W_\Gamma}(-1)^{\ell(x)}\dot{x} \varsigma_{w_k,\theta}  \in \sum_{i< k}\mathbb{X}_{w_i ,\theta}.$$
However we have the following  fact that for any $w\ne w'$ with $\ell(w)\leq \ell(w')$, there exists an element $u\in {\bf U}$ such that
$\dot{x} \rho_{w,\theta}(\dot{w}' u)\ne 0$ for any $x\in W$. Using this fact we get  a contradiction. Actually, for any element $\varphi \in \displaystyle \sum_{i< k}\mathbb{X}_{w_i ,\theta}$, there exists an element $u\in {\bf U}$ such that
$$\sum_{x\in W_\Gamma}(-1)^{\ell(x)}\dot{x} \varsigma_{w_k,\theta}(\dot{w_k} u)\ne 0 \ \ \text{and} \ \ \  \varphi(\dot{w_k} u)=0. $$
Thus the lemma is proved.

\end{proof}

\begin{Prop}
Let $\theta\in\widehat{\bf T}$, we have
$$  \op{Hom}_{\Bbbk {\bf G}}(M,  \mathbb{X}_{\theta} ) \cong \op{Hom}_{\Bbbk {\bf G}} (M, \op{Hom}_{\Bbbk {\bf B}}(\Bbbk {\bf G},  \Bbbk_{\theta}))$$
for any simple object $M\in \mathscr{O}({\bf G})$.
\end{Prop}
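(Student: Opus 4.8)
The plan is to compare the two Hom-spaces by fully understanding each side. On the right, $\op{Hom}_{\Bbbk {\bf B}}(\Bbbk {\bf G}, \Bbbk_\theta)$ is an injective $\Bbbk {\bf G}$-module (Lemma \ref{injective}), and by Frobenius reciprocity $\op{Hom}_{\Bbbk {\bf G}}(M, \op{Hom}_{\Bbbk {\bf B}}(\Bbbk {\bf G}, \Bbbk_\theta)) \cong \op{Hom}_{\Bbbk {\bf B}}(M, \Bbbk_\theta)$ for any $\Bbbk {\bf G}$-module $M$. When $M = E(\psi)_J$ is a simple object, this space is the $\theta$-isotypic part of the ${\bf B}$-coinvariants of $M$ (equivalently $\op{Hom}_{\bf T}$ of the ${\bf U}$-coinvariants against $\theta$), so I first compute, for each simple $E(\psi)_J$, the dimension of $\op{Hom}_{\Bbbk {\bf B}}(E(\psi)_J, \Bbbk_\theta)$. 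Using the explicit basis $\{u\dot{w}C(\psi)_J \mid w\in Z_J, u\in {\bf U}_{w_Jw^{-1}}\}$ of $E(\psi)_J$ recalled before Theorem \ref{EJ}, together with the ${\bf B}$-action, I expect this dimension to be $1$ precisely when $\theta = \psi^{w}$ for a suitable $w$ compatible with $J$ (and the $w_J$-twist built into $D(\psi)_J$), and $0$ otherwise — this is essentially the statement that $E(\psi)_J$ embeds in $\nabla(\psi)_J = \Bbbk{\bf G}\otimes_{\Bbbk{\bf P}_{J'}}\Bbbk_\psi$ as its socle and that the ${\bf B}$-homomorphisms out of a parabolically induced module are controlled by a double-coset/Mackey count.

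Next I analyze the left side. By Lemma \ref{direct summand} we have $\mathbb{X}_\theta \cong \bigoplus_{w\in W}\mathbb{X}_{w,\theta}$ with $\mathbb{X}_{w,\theta}\cong \Bbbk{\bf G}\otimes_{\Bbbk{\bf P}_{J_{w,\theta}}}\Bbbk_{\theta^w}$; in the notation introduced just before Proposition \ref{Parabolic}, each summand is precisely a $\nabla$-module, namely $\mathbb{X}_{w,\theta}\cong \nabla(\theta^w)_{K_w}$ where $K_w = I(\theta^w)\setminus J_{w,\theta}$. Since $E(\psi)_J$ is the socle of $\nabla(\psi)_J$, and more generally $\op{Hom}_{\Bbbk {\bf G}}(E(\psi)_J, \nabla(\phi)_K)$ is nonzero only if the simple $E(\psi)_J$ sits in the socle of $\nabla(\phi)_K$ (hence $\phi = \psi$ up to the relevant twist and $K = J$), the space $\op{Hom}_{\Bbbk {\bf G}}(M, \mathbb{X}_\theta)$ for $M = E(\psi)_J$ simple is a direct sum over $w\in W$ of the one- or zero-dimensional spaces $\op{Hom}_{\Bbbk {\bf G}}(E(\psi)_J, \nabla(\theta^w)_{K_w})$. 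I would then match this combinatorial count — which $w\in W$ give $\theta^w$ and $K_w$ hitting a given $(\psi, J)$ — against the double-coset count from the right-hand side. The bijection should come from: the $w$ contributing on the left are exactly those with $\theta^w = \psi^{w_J}$ and $K_w = I(\theta^w)\setminus J$, and parametrizing these $w$ amounts to choosing a minimal-length double coset representative, which is the same data counted by $\op{Hom}_{\Bbbk {\bf B}}(E(\psi)_J, \Bbbk_\theta)$.

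To make this clean I would actually route both sides through $\nabla$-modules: establish a lemma that $\op{Hom}_{\Bbbk {\bf G}}(E(\psi)_J, \nabla(\phi)_K) = \op{Hom}_{\Bbbk {\bf G}}(E(\psi)_J, \Bbbk{\bf G}\otimes_{\Bbbk{\bf P}_{K'}}\Bbbk_\phi) \cong \op{Hom}_{\Bbbk {\bf P}_{K'}}(\mathrm{Res}\, E(\psi)_J, \Bbbk_\phi)$ by Frobenius reciprocity, and that this is $\Bbbk$ exactly when $E(\psi)_J\cong E(\phi)_K$ (using that $E(\psi)_J$ is the socle and is simple, Theorem \ref{EJ}) and $0$ otherwise; then apply it with each $\nabla(\theta^w)_{K_w}$ on the left and with $\nabla(\psi)_J$ (whose restriction of the ${\bf B}$-count gives the right side) on the right. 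The main obstacle I anticipate is the bookkeeping in the twist: $E(\psi)_J$ is defined via the alternating sum $D(\psi)_J$, which secretly carries a $W_J$-action, so the character $\theta$ that pairs nontrivially with it is $\psi$ twisted by the longest element $w_J$ (or $w_{J_{w,\theta}}$, matching the $\varsigma_{w,\theta}$ construction), and I must verify that the family $\{(\theta^w, K_w) : w\in W\}$ arising from Lemma \ref{direct summand} meets each isomorphism class $E(\psi)_J$ with exactly the multiplicity predicted by the $\op{Hom}_{\Bbbk {\bf B}}(-,\Bbbk_\theta)$ computation — i.e.\ that passing to $\mathbb{X}_\theta$ loses no injective envelope data for simples, even though $\mathbb{X}_\theta$ is a proper (countable-dimensional) submodule of the uncountable-dimensional $\op{Hom}_{\Bbbk{\bf B}}(\Bbbk{\bf G},\Bbbk_\theta)$.
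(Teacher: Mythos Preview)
Your approach is essentially the paper's: apply Frobenius reciprocity on the right, use the direct-sum decomposition of $\mathbb{X}_\theta$ from Lemma~\ref{direct summand} on the left, reduce both sides to cardinalities of finite subsets of $W$, and match them. Two points need correction or completion.

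First, your expectation that $\dim\op{Hom}_{\Bbbk{\bf B}}(E(\psi)_J,\Bbbk_\theta)$ is at most $1$ is wrong in general. Using the $\Bbbk{\bf B}$-module decomposition $E(\psi)_J\cong\bigoplus_{w\in Z_J}\Bbbk{\bf U}_{w_Jw^{-1}}\dot w\, C(\psi)_J$, the paper gets $\dim\op{Hom}_{\Bbbk{\bf B}}(E(\psi)_J,\Bbbk_\theta)=\sharp\{w\in Z_J\mid \psi=\theta^w\}$, which can exceed $1$ whenever $W_\theta$ is strictly larger than $W_{I(\theta)}$. Likewise on the left, $\dim\op{Hom}_{\Bbbk{\bf G}}(E(\psi)_J,\mathbb{X}_\theta)=\sharp\{w\in W\mid \psi=\theta^w\text{ and }J_{w,\theta}=I(\psi)\setminus J\}$; there is no $w_J$-twist in the condition $\psi=\theta^w$, so your remark about $\theta^w=\psi^{w_J}$ is off.

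Second, the actual content of the proof is the bijection between these two sets, and you have not supplied one. The paper rewrites both counts as $|\Omega_\theta(\psi,J)|$ and $|\Omega_\theta(\psi,I(\psi)\setminus J)|$, where $\Omega_\theta(\psi,J)=\{w\in W\mid \psi=\theta^w,\ J\subset\mathscr{R}(w)\subset J\cup(I\setminus I(\psi))\}$, and then exhibits the explicit bijection $\Xi:\Omega_\theta(\psi,J)\to\Omega_\theta(\psi,I(\psi)\setminus J)$ given by writing $w=xy$ with $x$ a fixed left coset representative for $W_{I(\psi)}$ and $y\in W_{I(\psi)}$, and setting $\Xi(w)=x\,w_{I(\psi)}\,y$. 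This ``multiply by the longest element of $W_{I(\psi)}$ in the middle'' map is the missing idea; without it the matching you describe remains a hope rather than a proof.
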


 \begin{proof} Each simple object in $\mathscr{O}({\bf G})$ has the form $E(\lambda)_J$ for some $\lambda \in \widehat{\bf T}$ and $J\subset I(\lambda)$. Firstly, we have
$$\op{Hom}_{\Bbbk {\bf G}} (E(\lambda)_J, \op{Hom}_{\Bbbk {\bf B}}(\Bbbk {\bf G},  \Bbbk_{\theta})) \cong \op{Hom}_{\Bbbk {\bf B}} (E(\lambda)_J,  \Bbbk_{\theta})$$
by Frobenius reciprocity.
Since $E(\lambda)_J= \displaystyle \sum_{w\in Z_J} \Bbbk {\bf U}_{w_Jw^{-1}}w C(\lambda)_J$, we have
$$E(\lambda)_J\cong \bigoplus_{w\in Z_J} \Bbbk {\bf U}_{w_Jw^{-1}}w C(\lambda)_J$$
as $\Bbbk {\bf B}$-modules. Then we get
$$\dim \op{Hom}_{\Bbbk {\bf B}} (E(\lambda)_J,  \Bbbk_{\theta}) =\sharp \{w \in Z_J\mid \lambda=\theta^w \}.$$

On the other hand, by Lemma \ref{direct summand}, we have $$ \op{Hom}_{\Bbbk {\bf G}}(E(\lambda)_J,  \mathbb{X}_{\theta} )\cong  \bigoplus_{w\in W}\op{Hom}_{\Bbbk {\bf G}}(E(\lambda)_J,  \mathbb{X}_{w, \theta} ), $$
which implies that
$$\dim \op{Hom}_{\Bbbk {\bf G}}(E(\lambda)_J,  \mathbb{X}_{\theta} )=\sharp \{w \in W \mid \lambda=\theta^w \ \text{and}  \ J_{w,\theta}= I(\lambda)\setminus J  \}$$

For  fixed $\theta \in \widehat{\bf T}$, $\lambda \in \widehat{\bf T}$ and $J\subset I(\lambda)$,  we let $$\Omega_{\theta}(\lambda, J)= \{w\in W \mid \lambda=\theta^w \ \text{and}\ J\subset \mathscr{R}(w) \subset J\cup (I\setminus I(\lambda))  \}.$$ Then it is easy to check that
$$\dim \op{Hom}_{\Bbbk {\bf B}} (E(\lambda)_J, \Bbbk_{\theta}) = |\Omega_{\theta}(\lambda, J)|,$$
 $$ \dim \op{Hom}_{\Bbbk {\bf G}}(E(\lambda)_J,  \mathbb{X}_{\theta} )= |\Omega_{\theta}(\lambda,  I(\lambda) \setminus J)|. $$
In the following we will show that $|\Omega_{\theta}(\lambda, J)|= |\Omega_{\theta}(\lambda,  I(\lambda) \setminus J)|$.

Let $x_1, x_2, \dots, x_m$ be a complete representative set of the left cosets of $W_{I(\lambda)}$ in $W$. For each $w\in \Omega_{\theta}(\lambda, J)$ with the form $w=x_i y$ for some $i=1,2,\dots, m$ and $y\in W_{I(\lambda)}$.  Since $\theta^w =\theta^{x_i y}=\lambda$ and $y\in W_{I(\lambda)}$,  we get $\theta^{x_i}=\lambda$.
Now we set $\Xi(w)=x_i w_{I(\lambda)}y$. Then we also have $\theta^{\Xi(w)}=\theta^{x_i}=\lambda$. Thus it is easy to check that
$$\Xi(w)=x_i w_{I(\lambda)}y \in \Omega_{\theta}(\lambda,  I(\lambda) \setminus J)$$
which gives a bijecction $\Xi: \Omega_{\theta}(\lambda, J)\longrightarrow \Omega_{\theta}(\lambda,  I(\lambda) \setminus J) $. Hence we have
$$  \op{Hom}_{\Bbbk {\bf G}}(E(\lambda)_J,  \mathbb{X}_{\theta} ) \cong \op{Hom}_{\Bbbk {\bf G}} (E(\lambda)_J, \op{Hom}_{\Bbbk {\bf B}}(\Bbbk {\bf G},  \Bbbk_{\theta}))$$
by the above discussion.

\end{proof}

\begin{Conj} \label{conjecture}
Let $\theta\in\widehat{\bf T}$, one has that
$$  \op{Hom}_{\Bbbk {\bf G}}(M,  \mathbb{X}_{\theta} ) \cong \op{Hom}_{\Bbbk {\bf G}} (M, \op{Hom}_{\Bbbk {\bf B}}(\Bbbk {\bf G},  \Bbbk_{\theta}))$$
for any object $M\in \mathscr{O}({\bf G})$. Thus $\mathbb{X}_{\theta} $ is an injective object in $\mathscr{O}({\bf G})$ for each $\theta\in \widehat{\bf T}$.
\end{Conj}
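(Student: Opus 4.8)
The plan is to reduce the conjecture, by a dévissage along composition series, to a single cohomological vanishing statement for the costandard modules $\nabla(\mu)_K$, and then to attack that statement through the explicit function model underlying Lemmas \ref{generatedmodule} and \ref{direct summand}.

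First I would collect the elementary structural facts. Put $\mathbb{I}:=\op{Hom}_{\Bbbk{\bf B}}(\Bbbk{\bf G},\Bbbk_\theta)$; by Lemma \ref{injective} it is injective in $\Bbbk{\bf G}\mmod$, and by construction $\mathbb{X}_\theta=\sum_{w\in W}\Bbbk{\bf G}\rho_{w,\theta}$ is a $\Bbbk{\bf G}$-submodule of $\mathbb{I}$. By Lemma \ref{direct summand}, $\mathbb{X}_\theta\cong\bigoplus_{w\in W}\mathbb{X}_{w,\theta}$ with $\mathbb{X}_{w,\theta}\cong\Bbbk{\bf G}\otimes_{\Bbbk{\bf P}_{J_{w,\theta}}}\Bbbk_{\theta^w}=\mathbb{M}(\theta^w,J_{w,\theta})$, which is of finite length with composition factors among the $E(\theta^w)_J$ by Proposition \ref{Parabolic}; since $W$ is finite, $\mathbb{X}_\theta$ itself has finite length, hence lies in $\mathscr{O}({\bf G})$, and each $\mathbb{X}_{w,\theta}$ is, in the notation of Section 1, a costandard module $\nabla(\theta^w)_{I(\theta^w)\setminus J_{w,\theta}}$ with simple socle $E(\theta^w)_{I(\theta^w)\setminus J_{w,\theta}}$. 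The conjectured isomorphism for an arbitrary $M\in\mathscr{O}({\bf G})$ is equivalent to the statement that every $\Bbbk{\bf G}$-morphism $M\to\mathbb{I}$ has image inside $\mathbb{X}_\theta$, equivalently that $\mathbb{X}_\theta$ contains every submodule of $\mathbb{I}$ belonging to $\mathscr{O}({\bf G})$, equivalently --- since $\mathscr{O}({\bf G})$ is a Serre subcategory of $\Bbbk{\bf G}\mmod$ --- that $\mathbb{I}/\mathbb{X}_\theta$ has no nonzero submodule lying in $\mathscr{O}({\bf G})$.

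Next I would run the dévissage. The Proposition preceding the conjecture already establishes the isomorphism for every simple $M$, and it is the one induced by the inclusion $\mathbb{X}_\theta\hookrightarrow\mathbb{I}$, because that comparison map is always injective on $\op{Hom}$-spaces and the two sides are of equal dimension when $M$ is simple. Now let $M$ have length $>1$, pick a short exact sequence $0\to M'\to M\to L\to 0$ with $L$ simple, and compare the long exact sequence obtained by applying $\op{Hom}_{\Bbbk{\bf G}}(-,\mathbb{X}_\theta)$ with the short exact sequence obtained by applying $\op{Hom}_{\Bbbk{\bf G}}(-,\mathbb{I})$ (short exact because $\mathbb{I}$ is injective), the two being linked by the natural transformation induced by $\mathbb{X}_\theta\hookrightarrow\mathbb{I}$. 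The maps on the $L$- and $M'$-terms are isomorphisms --- by the Proposition and by induction on length respectively --- so a short five-lemma argument promotes this to an isomorphism on the $M$-term provided $\op{Ext}^1_{\Bbbk{\bf G}}(L,\mathbb{X}_\theta)=0$. Hence the whole conjecture is equivalent to the vanishing $\op{Ext}^1_{\Bbbk{\bf G}}(E(\lambda)_J,\mathbb{X}_\theta)=0$ for all $\lambda\in\widehat{\bf T}$ and $J\subset I(\lambda)$, and by the direct sum decomposition of $\mathbb{X}_\theta$ this reduces to showing that each costandard module $\nabla(\mu)_K$ occurring as a summand of $\mathbb{X}_\theta$ is injective in $\mathscr{O}({\bf G})$, i.e. $\op{Ext}^1_{\Bbbk{\bf G}}(E(\lambda)_J,\nabla(\mu)_K)=0$ for all $\lambda,J$ --- the nontrivial cases being those in which $\lambda$ lies in the $W$-orbit of $\mu$, so only finitely many instances remain.

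I expect this last vanishing to be the real obstacle: it amounts to the assertion that $\mathscr{O}({\bf G})$ is a highest weight category with the $\nabla(\mu)_K$ as injective costandard objects, so no purely formal argument can close it. Three complementary routes seem natural. (i) Work inside the function model: realise $\nabla(\mu)_K$ as the cyclic submodule $\Bbbk{\bf G}\varsigma_{w,\theta}\subset\mathbb{I}$ for a suitable $w$, and, given an extension $0\to\nabla(\mu)_K\to E\to E(\lambda)_J\to 0$ in $\mathscr{O}({\bf G})$, construct a ${\bf G}$-equivariant splitting by lifting the explicit basis $\{u\dot{w}C(\lambda)_J\}$ of $E(\lambda)_J$ against the bases recalled in Section 1; the goal is to force the obstruction class to be a coboundary by showing that $\mathbb{I}/\mathbb{X}_\theta$ contains no nonzero module of finite length with composition factors among the $E(\mu)_K$ --- a reductive-group analogue of the way a coinduced module in Lie theory has the relevant dual Verma module (and its injective hull) as its entire locally-finite part. (ii) Grant the expected $\Delta$--$\nabla$ orthogonality for the modules $\Delta(\lambda)_J$ and $\nabla(\mu)_K$, rewrite the desired $\op{Ext}^1$-vanishing as a combinatorial condition on the occurrence of $E(\mu)_K$ in the radicals of the $\Delta(\lambda)_J$, and verify that condition from Theorem \ref{EJ} and Proposition \ref{Parabolic}. (iii) When $|I(\theta)|\le 2$ the algebra $A_\theta\cong\mathscr{A}_n$ with $n\le 2$ is of finite representation type, so the finitely many relevant $\op{Ext}^1$-groups can simply be read off from the quiver with relations of $\mathscr{A}_n$ given in Section 4, which would at least settle the conjecture for ${\bf G}$ of semisimple rank $\le 2$. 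Passing from these partial results to arbitrary rank is precisely what remains open.
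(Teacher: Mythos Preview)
This statement is labeled a \emph{Conjecture} in the paper and is not proved there: the paper establishes only the simple-module case (the Proposition immediately preceding it) and then, from Section~3 onward, explicitly \emph{assumes} the Conjecture in order to develop the highest-weight-category consequences. So there is no ``paper's own proof'' to compare your proposal against.

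Your reduction goes further than the paper and is correct. The d\'evissage along composition series is sound; the five-lemma step works because $\mathscr{O}({\bf G})$ is a Serre subcategory of $\Bbbk{\bf G}$-Mod (so the relevant $\op{Ext}^1$ computed in either category coincide), and you correctly isolate the Conjecture as equivalent to the single vanishing statement $\op{Ext}^1_{\Bbbk{\bf G}}\bigl(E(\lambda)_J,\nabla(\mu)_K\bigr)=0$, i.e.\ to the injectivity of each $\nabla(\mu)_K$ in $\mathscr{O}({\bf G})$. You are also right that this residual statement is the entire content of the Conjecture and cannot be dispatched formally.

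Two of your three proposed attacks, however, are circular as written. Approach~(iii) invokes the quiver description $A_\theta\cong\mathscr{A}_n$ and the equivalence between $\mathscr{O}({\bf G})_\theta$ and right $A_\theta$-modules from Sections~3--4, but both of these are \emph{derived in the paper under the standing assumption that the Conjecture holds}; reading off $\op{Ext}^1$-groups from the quiver of $\mathscr{A}_n$ therefore presupposes exactly what you want to prove. To make~(iii) honest even in rank $\le 2$ you would have to classify the relevant length-two extensions directly in $\Bbbk{\bf G}$-Mod. Approach~(ii) has the same defect in disguise: the ``expected $\Delta$--$\nabla$ orthogonality'' you propose to grant is essentially a restatement of the injectivity of the $\nabla$'s. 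Only approach~(i) --- showing directly that $\op{Hom}_{\Bbbk{\bf B}}(\Bbbk{\bf G},\Bbbk_\theta)/\mathbb{X}_\theta$ admits no nonzero finite-length $\Bbbk{\bf G}$-submodule with composition factors among the $E(\mu)_K$ --- is non-circular, and it is also the one you leave least developed; that is where any genuine proof would have to begin.
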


By Lemma \ref{direct summand}, each $ \mathbb{X}_{w,\theta}$ is a direct summand of $ \mathbb{X}_{\theta}$, thus  $\nabla(\theta)_J=\Bbbk{\bf G}\otimes_{\Bbbk{\bf P}_{J'}}\Bbbk_\theta$ is an injective object in $\mathscr{O}({\bf G})$ for any $J\subset I(\theta)$.  So $\nabla(\theta)_J$ is the injective envelope of $E(\theta)_J$.
Therefore Conjecture \ref{conjecture} suggests the existence of enough injectives in the category $\mathscr{O}({\bf G})$.

\begin{Thm} \label{enough injectives}
The category $\mathscr{O}({\bf G})$ has enough injectives.
\end{Thm}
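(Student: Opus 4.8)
The plan is to deduce the theorem from Conjecture~\ref{conjecture} (which is the standing hypothesis here) by the classical "socle argument": once every simple object of $\mathscr{O}({\bf G})$ is known to have an injective envelope inside the category, an arbitrary object embeds into a finite direct sum of such envelopes. So the first thing I would do is record the needed consequence of the conjecture. Granting Conjecture~\ref{conjecture}, the functor $\op{Hom}_{\Bbbk{\bf G}}(-,\mathbb{X}_\theta)$ on $\mathscr{O}({\bf G})$ is naturally isomorphic to $\op{Hom}_{\Bbbk{\bf G}}(-,\op{Hom}_{\Bbbk{\bf B}}(\Bbbk{\bf G},\Bbbk_\theta))$, and the latter is exact because $\op{Hom}_{\Bbbk{\bf B}}(\Bbbk{\bf G},\Bbbk_\theta)$ is an injective $\Bbbk{\bf G}$-module by Lemma~\ref{injective}; hence $\mathbb{X}_\theta$ is an injective object of $\mathscr{O}({\bf G})$, and by Lemma~\ref{direct summand} so is each direct summand $\mathbb{X}_{w,\theta}\cong\Bbbk{\bf G}\otimes_{\Bbbk{\bf P}_{J_{w,\theta}}}\Bbbk_{\theta^w}$. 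Taking $\theta=\lambda$ and $w=\vartheta_{I(\lambda)\setminus J}$ (the longest element of $W_{I(\lambda)\setminus J}$) gives $\lambda^w=\lambda$ and $J_{w,\lambda}=\mathscr{R}(w)\cap I(\lambda)=I(\lambda)\setminus J$, so that $\mathbb{X}_{w,\lambda}\cong\nabla(\lambda)_J$. Thus every $\nabla(\lambda)_J$ is injective in $\mathscr{O}({\bf G})$, and since $E(\lambda)_J$ is its socle, $\nabla(\lambda)_J$ is the injective envelope of the simple object $E(\lambda)_J$. (This is essentially the content of the paragraph just before the theorem; I would only make the dependence on the conjecture explicit.)

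Next I would run the socle argument. Let $M\in\mathscr{O}({\bf G})$, $M\neq 0$. Since $M$ has finite length, its socle is a nonzero semisimple subobject of finite length, so $\op{soc}(M)\cong\bigoplus_{i=1}^{n}E(\lambda_i)_{J_i}$ for suitable $\lambda_i\in\widehat{\bf T}$ and $J_i\subset I(\lambda_i)$ (with repetitions allowed). Put $Q=\bigoplus_{i=1}^{n}\nabla(\lambda_i)_{J_i}$. Each $\nabla(\lambda_i)_{J_i}$ lies in $\mathscr{O}({\bf G})$ (its composition factors are described by Proposition~\ref{Parabolic}), hence $Q\in\mathscr{O}({\bf G})$; and a finite direct sum of injective objects is injective, so $Q$ is an injective object of $\mathscr{O}({\bf G})$. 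The inclusions $E(\lambda_i)_{J_i}\hookrightarrow\nabla(\lambda_i)_{J_i}$ assemble into a monomorphism $\iota:\op{soc}(M)\hookrightarrow Q$.

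Finally, composing $\iota$ with the inclusion $\op{soc}(M)\hookrightarrow M$ and using that $Q$ is injective, I would extend $\iota$ to a morphism $f:M\to Q$ with $f|_{\op{soc}(M)}=\iota$. I claim $f$ is a monomorphism: if $\ker f\neq 0$, then, $M$ having finite length, $\ker f$ contains a simple subobject $E$, which necessarily lies in $\op{soc}(M)$; but then $\iota(E)=f(E)=0$ contradicts the injectivity of $\iota$. Hence $f$ is a monomorphism and $M$ embeds into the injective object $Q\in\mathscr{O}({\bf G})$. As $M$ was arbitrary, $\mathscr{O}({\bf G})$ has enough injectives.

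The only genuinely nontrivial input is the first step, and there the whole argument rests on Conjecture~\ref{conjecture}: without it, the Proposition preceding Conjecture~\ref{conjecture} only supplies the required $\op{Hom}$-isomorphism for \emph{simple} test objects $M$, which does not suffice to conclude that $\mathbb{X}_\theta$ (equivalently $\nabla(\lambda)_J$) is injective in $\mathscr{O}({\bf G})$, and hence does not let one extend $\iota$ to a general $M$ in the last step. Everything after Step~1 is the textbook socle argument together with the elementary fact that finite direct sums of injectives are injective, so this is where essentially all the difficulty is concentrated.
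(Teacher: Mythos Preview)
Your proof is correct. Both your argument and the paper's rest on the same starting point, namely that Conjecture~\ref{conjecture} forces each $\nabla(\lambda)_J$ to be injective in $\mathscr{O}({\bf G})$ (your explicit realisation of $\nabla(\lambda)_J$ as $\mathbb{X}_{w,\lambda}$ with $w=\vartheta_{I(\lambda)\setminus J}$ is a nice touch, though the paper already records this consequence just before the theorem). From there the routes diverge: you run the standard socle argument, embedding $\op{soc}(M)$ into $Q=\bigoplus_i\nabla(\lambda_i)_{J_i}$ and extending by injectivity; the paper instead inducts on length by peeling off a simple \emph{quotient} $E(\theta)_J$, embedding the kernel $N$ into an injective $Q$ by induction, extending to $h:M\to Q$, and then observing that either $h$ is mono or the sequence splits. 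Your approach is a bit cleaner---it avoids the case split and in fact produces directly an embedding into (a candidate for) the injective envelope of $M$---while the paper's argument has the mild virtue of never needing to talk about the socle or to identify it explicitly. Mathematically there is nothing to choose between them.
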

\begin{proof} We do induction on the length of $M\in \mathscr{O}({\bf G})$. Firstly, $\nabla(\theta)_J= \Bbbk{\bf G}\otimes_{\Bbbk{\bf P}_{J'}}\Bbbk_\theta$ is the injective envelope of
$E(\theta)_J$. Assuming that $M$ has length bigger than $1$, it has a simple quotient $E(\theta)_J$.
 Then we get a short exact sequence
$$0\rightarrow N \xrightarrow{f} M \xrightarrow{g} E(\theta)_J \rightarrow 0.$$ So by induction there exists a monomorphism $N\xrightarrow{i} Q$ for some injective $Q\in \mathscr{O}({\bf G})$. Then there exists a morphism $M\xrightarrow{h} Q$ such that $i=h f$. Therefore either $M\xrightarrow{h} Q$ is a monomorphism or
$M\cong N\oplus E(\theta)_J$. In the latter case, $M\rightarrow Q\oplus \nabla(\theta)_J$ is momomorphism and the theorem is proved.

\end{proof}

\section{Highest Weight Category $\mathscr{O}({\bf G})$ }

From now on, we assume that Conjecture \ref{conjecture}  is established.  Thus $\mathscr{O}({\bf G})$ has enough injectives. We will show that the principal representation category $\mathscr{O}({\bf G})$ is a highest weight category.
Firstly we recall the definition of highest weight categories (see \cite{CPS}).

\begin{Def}\label{HWC} Let $\mathscr{C}$ be a locally artinian, abelian, $\Bbbk$-linear category with enough injectives that satisfies Grothendieck's condition. Then we call $\mathscr{C}$ a highest weight category if there exists a locally finite poset $\Lambda$ (the "weights" of $\mathscr{C}$), such that:

(a) There is a complete collection $\{S(\lambda)_{\lambda \in \Lambda}\}$ of non-isomorphic simple objects of $\mathscr{C}$ indexed by the set $\Lambda$.

(b) There is a collection $\{A(\lambda)_{\lambda \in \Lambda}\}$ of objects of $\mathscr{C}$ and, for each $\lambda$, an embedding $S(\lambda)\subset A(\lambda) $ such that all composition factors $S(\mu)$ of $A(\lambda)/S(\lambda)$ satisfy $\mu < \lambda$. For $\lambda,\mu \in \Lambda$,
we have that $dim_{\Bbbk}\op{Hom}_{\mathscr{C}}(A(\lambda), A(\mu))$ and $[A(\lambda): S(\mu)]$ are finite.

(c) Each simple object $S(\lambda)$ has an injective envelope $I(\lambda)$ in $\mathscr{C}$.
Also, $I(\lambda)$  has a good filtration $0= F_0(\lambda)\subset F_1(\lambda)\subset \dots $ such that:

\noindent (i) $F_1(\lambda)\cong A(\lambda)$;

\noindent  (ii) for $n>1$, $F_n(\lambda)/F_{n-1}(\lambda) \cong A(\mu)$ for some $\mu=\mu(n)> \lambda$;

\noindent  (iii) for a given $\mu \in \Lambda$, $\mu=\mu(n)$ for only finitely many $n$;

\noindent  (iv) $\bigcup F_i(\lambda)= I(\lambda)$.
\end{Def}

Now we show that $\mathscr{O}({\bf G})$ is a highest weight category. In Definition \ref{HWC},  the set of  weights  is $\Lambda= \{(\theta, J )\ | \ \theta \in \widehat{\bf T}, J \subset I(\theta) \}$ and we define the order by
$$(\theta_1, J_1) \leq (\theta_2, J_2 ), \ \text{if}\ \theta_1=\theta_2 \ \text{and} \ J_1\supseteq J_2.$$
Set $S(\lambda)=A(\lambda)=E(\theta)_J$ and $I(\lambda)= \nabla(\theta)_J$, then the condition in Definition \ref{HWC} is easy to check. So  $\mathscr{O}({\bf G})$ is a highest weight category.

The highest weight category has many good properties (see \cite{CPS}). We list some interesting propositions of the category $\mathscr{O}({\bf G})$.
\begin{Prop}\label{Extension} \label{Ext} (1) For $n\geq 0$, $\op{Ext}^n_{\mathscr{O}({\bf G})}(M,N)$ is finite-dimensional for all $M,N\in \mathscr{O}({\bf G})$.

\noindent $(2)$ If $\op{Ext}^n_{\mathscr{O}({\bf G})}(E(\lambda)_J,E(\mu)_K)\ne 0$, then $\lambda=\mu$ and $J\subseteq K$. Moreover,  if $n>0$, we have   $\lambda=\mu$ and $J\subsetneq K$.
\end{Prop}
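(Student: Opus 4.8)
The plan is to deduce both statements from the fact that $\mathscr{O}(\mathbf G)$ is a highest weight category with weight poset $\Lambda = \{(\theta,J) : \theta\in\widehat{\mathbf T},\ J\subset I(\theta)\}$ ordered by $(\theta_1,J_1)\le(\theta_2,J_2)$ iff $\theta_1=\theta_2$ and $J_1\supseteq J_2$, together with the explicit injective resolutions available to us. For (1), I would first observe that every object of $\mathscr{O}(\mathbf G)$ has finite length, and that the injective hulls $I(\theta,J)=\nabla(\theta)_J$ also have finite length (since $\nabla(\theta)_J=\mathbb M(\theta,J')$ has exactly $2^{|J|}$ composition factors by Proposition \ref{Parabolic}). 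Hence each $E(\theta)_J$ admits an injective resolution by finite-length injective objects, and therefore so does every $M\in\mathscr{O}(\mathbf G)$ (splice together resolutions along a composition series, or resolve directly). Since $\op{Hom}_{\mathscr{O}(\mathbf G)}(M,I)$ is finite-dimensional whenever both $M$ and $I$ have finite length — this uses that $\op{Hom}_{\mathbf G}(E(\lambda)_J, E(\mu)_K)$ is at most one-dimensional (Schur, as the $E(\theta)_J$ are absolutely irreducible and pairwise non-isomorphic by Theorem \ref{EJ}), so a dévissage on the lengths of $M$ and $I$ bounds $\dim\op{Hom}$ by the product of the lengths — the cohomology of the complex $\op{Hom}_{\mathscr{O}(\mathbf G)}(M,I^\bullet)$ is computed from finite-dimensional spaces, and $\op{Ext}^n_{\mathscr{O}(\mathbf G)}(M,N)$ is finite-dimensional for all $n$.

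For (2), by the long exact sequence in $\op{Ext}$ and induction on the lengths of $E(\lambda)_J$-type arguments it suffices to treat $M=E(\lambda)_J$ and $N=E(\mu)_K$ both simple. I would compute $\op{Ext}^n$ using the minimal injective resolution of $E(\mu)_K$, which by the highest weight structure (the good filtration of $I(\mu,K)$, Definition \ref{HWC}(c)) is built out of the objects $\nabla(\mu)_L = A(\mu,L)$ with $L\subseteq K$: indeed $I(\mu,K)=\nabla(\mu)_K$ has a filtration whose sections are $\nabla(\mu)_L$ for various $L\supsetneq K$ (equivalently, weights strictly above $(\mu,K)$ in our order, i.e.\ $L\subsetneq K$), and iterating, every term of the minimal injective resolution of $E(\mu)_K$ is a direct sum of modules $\nabla(\mu)_L$ with $L\subseteq K$ and with $\mu$ fixed. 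Since $\op{Hom}_{\mathbf G}(E(\lambda)_J,\nabla(\mu)_L)\ne 0$ forces $E(\lambda)_J$ to embed in $\nabla(\mu)_L$, and the socle of $\nabla(\mu)_L$ is $E(\mu)_L$, this forces $\lambda=\mu$ and $J=L\subseteq K$; hence $\op{Ext}^n_{\mathscr{O}(\mathbf G)}(E(\lambda)_J,E(\mu)_K)=0$ unless $\lambda=\mu$ and $J\subseteq K$. For the refinement when $n>0$: if $J=K$ then $\op{Ext}^n(E(\mu)_K,E(\mu)_K)$ is computed from $\op{Hom}(E(\mu)_K,I^\bullet)$ where $I^0=\nabla(\mu)_K$ contributes the $n=0$ term and all higher terms $I^{\ge 1}$ are sums of $\nabla(\mu)_L$ with $L\subsetneq K$, so $\op{Hom}(E(\mu)_K,I^n)=0$ for $n\ge 1$; thus $\op{Ext}^n(E(\mu)_K,E(\mu)_K)=0$ for $n>0$, which gives $J\subsetneq K$ in the nonvanishing case.

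The main obstacle I anticipate is making the claim about the minimal injective resolution of $E(\mu)_K$ precise: I need that not only $I(\mu,K)$ but every injective appearing in the resolution decomposes as a sum of $\nabla(\mu)_L$'s with the \emph{same} $\mu$. This follows from the block decomposition $\mathscr{O}(\mathbf G)=\bigoplus_\theta \mathscr{O}(\mathbf G)_\theta$ noted in the introduction — all $\op{Ext}$'s between objects with different $\theta$ vanish, so the resolution of $E(\mu)_K$ stays inside $\mathscr{O}(\mathbf G)_\mu$ — combined with the fact that in $\mathscr{O}(\mathbf G)_\mu$ the indecomposable injectives are exactly the $\nabla(\mu)_L$, $L\subseteq I(\mu)$ (the remark after Conjecture \ref{conjecture}). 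Once the block decomposition is invoked the rest is bookkeeping with the good filtration and the one-dimensionality of the relevant $\op{Hom}$ spaces. Alternatively, and perhaps more cleanly, one can package the whole argument through the equivalence $\mathscr{O}(\mathbf G)_\theta\simeq \mathrm{mod}\text{-}A_\theta$ with $A_\theta$ finite-dimensional quasi-hereditary, whereupon (1) is automatic and (2) is the standard $\op{Ext}$-vanishing statement for quasi-hereditary algebras ($\op{Ext}^n(S(\lambda),S(\mu))\ne 0\Rightarrow \lambda\le\mu$, strictly for $n>0$ when $\lambda=\mu$ since $\op{Ext}^1(L,L)=0$ for a simple $L$ whose costandard hull has simple socle).
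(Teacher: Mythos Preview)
The paper does not prove this proposition: it is stated, without argument, as one of the ``good properties'' of a highest weight category, with a bare reference to \cite{CPS}. Your proposal is essentially the standard argument underlying that citation, and it is correct in outline.

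Two remarks are worth making. First, a notational slip: in the paper's highest weight structure one has $A(\lambda)=S(\lambda)=E(\theta)_J$, so the $\nabla(\theta)_J$ play the role of the injectives $I(\lambda)$, not of the objects $A(\lambda)$; your identification ``$\nabla(\mu)_L=A(\mu,L)$'' is therefore wrong, though harmless, since what you actually use is only the list of composition factors of $\nabla(\mu)_K$, and that is Proposition~\ref{Parabolic}. Second, and more substantively, the obstacle you anticipate is a phantom, and your proposed remedy would be circular. In the paper both the block decomposition $\mathscr{O}({\bf G})=\bigoplus_\theta\mathscr{O}({\bf G})_\theta$ and the equivalence $\mathscr{O}({\bf G})_\theta\simeq\mathrm{mod}\text{-}A_\theta$ are \emph{deduced from} Proposition~\ref{Ext}, so you cannot invoke them to prove it. Fortunately you do not need to: Proposition~\ref{Parabolic} already tells you that every composition factor of $\nabla(\mu)_K$ is of the form $E(\mu)_L$ with the \emph{same} $\mu$ and with $L\subseteq K$. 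Hence the socle of $\nabla(\mu)_K/E(\mu)_K$ is a sum of simples $E(\mu)_L$ with $L\subsetneq K$, its injective envelope is a sum of the corresponding $\nabla(\mu)_L$, and induction then propagates both the fixed $\mu$ and the strict containment $L\subsetneq K$ through all higher terms of the minimal injective resolution. Your direct argument therefore already goes through without any appeal to the block decomposition or to $A_\theta$.
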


According to \cite[Corollary 5.6]{CD3}, we know that any finite-dimensional irreducible representations of $\bf G$ are one-dimensional when $\Bbbk$ is algebraically closed with $\op{char}\Bbbk\neq\op{char}\bar{\mathbb{F}}_q$. Moreover, these irreducible representations are isomorphic to $E(\theta)_\varnothing$ for some $\theta\in\widehat{\bf T}$ with $I(\theta)=I$. By Proposition \ref{Extension} we have the following corollary immediately.

\begin{Cor}\label{fdrep} The finite-dimensional complex irreducible representations of the group ${\bf G}$ is one-dimensional and all the finite-dimensional complex  representations of  ${\bf G}$ are semisimple.
\end{Cor}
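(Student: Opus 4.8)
The plan is to derive both assertions from the machinery already in place. For the first assertion, I would invoke \cite[Corollary 5.6]{CD3} (as cited in the paragraph preceding the statement): over an algebraically closed field $\Bbbk$ with $\op{char}\Bbbk\neq\op{char}\bar{\mathbb{F}}_q$, every finite-dimensional irreducible $\Bbbk{\bf G}$-module is one-dimensional and is isomorphic to some $E(\theta)_\varnothing$ with $I(\theta)=I$. Since we are in the case $\Bbbk=\mathbb{C}$ (so $\op{char}\Bbbk=0\neq\op{char}\bar{\mathbb{F}}_q$), this immediately gives the first half of the corollary. So the real content is the semisimplicity claim.

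For the second assertion, I would first reduce to the indecomposable case: any finite-dimensional $\mathbb{C}{\bf G}$-module $M$ is a finite direct sum of finite-dimensional indecomposable modules, so it suffices to show every finite-dimensional indecomposable $\mathbb{C}{\bf G}$-module is simple. Let $M$ be such a module. All its composition factors are finite-dimensional irreducibles, hence (by the first part) of the form $E(\theta_i)_\varnothing$ with $I(\theta_i)=I$; in particular $M$ is an object of $\mathscr{O}({\bf G})$. The key step is then to show $M$ cannot be a nonsplit extension: if $0\to E(\lambda)_{J}\to M'\to E(\mu)_{K}\to 0$ were a nonsplit extension of two such composition factors, then $\op{Ext}^1_{\mathscr{O}({\bf G})}(E(\mu)_{K},E(\lambda)_{J})\neq 0$, so by Proposition \ref{Extension}(2) we would need $\lambda=\mu$ and $K\subsetneq J$. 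But for our factors $J=K=\varnothing$, which contradicts $K\subsetneq J$. Hence every extension among the composition factors of $M$ splits, so inductively (on the length of $M$) $M$ is semisimple; being indecomposable, $M$ is simple. Therefore every finite-dimensional $\mathbb{C}{\bf G}$-module is semisimple.

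The only point requiring a little care is confirming that the $\op{Ext}^1$ vanishing really applies: one must check that Proposition \ref{Extension}(2) is about $\op{Ext}^1_{\mathscr{O}({\bf G})}$ and that $\op{Ext}^1$ computed in $\mathscr{O}({\bf G})$ agrees with the extension group in $\mathbb{C}{\bf G}\mmod$ for the relevant modules — this holds because $\mathscr{O}({\bf G})$ is a full subcategory closed under extensions (any extension of objects of $\mathscr{O}({\bf G})$ again has finite length with composition factors among the $E(\theta)_J$), so short exact sequences of objects of $\mathscr{O}({\bf G})$ are the same whether viewed inside or outside $\mathscr{O}({\bf G})$. Modulo that routine verification, I do not anticipate a genuine obstacle; the heavy lifting is entirely in \cite{CD3} and in Proposition \ref{Extension}, both of which we are entitled to use. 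Note that this argument in fact shows more: every object of $\mathscr{O}({\bf G})$ all of whose composition factors are of the form $E(\theta)_\varnothing$ is semisimple, and the finite-dimensional $\mathbb{C}{\bf G}$-modules are exactly such objects.
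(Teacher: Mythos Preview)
Your proposal is correct and follows essentially the same approach as the paper: the paper simply observes (in the paragraph preceding the corollary) that the first assertion is \cite[Corollary~5.6]{CD3} and that the semisimplicity then follows immediately from Proposition~\ref{Extension}. Your write-up just fills in the details of that ``immediately,'' including the useful remark that $\mathscr{O}({\bf G})$ is closed under extensions so that the relevant $\op{Ext}^1$ groups agree.
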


For $\theta\in \widehat{\bf T}$, let $\mathscr{O}({\bf G})_\theta$ be the subcategory of $\mathscr{O}({\bf G})$ containing the objects whose subquotients are $E(\theta)_J$ for some $J\subset I(\theta)$.
Then by Proposition \ref{Ext}, we have $\mathscr{O}({\bf G})= \displaystyle \bigoplus_{\theta\in \widehat{\bf T}} \mathscr{O}({\bf G})_\theta$. For each $\theta\in \widehat{\bf T}$, $\mathscr{O}({\bf G})_\theta$ is a highest weight category and then there exists a finite-dimensional quasi-hereditary algebra $A_\theta$ such that $\mathscr{O}({\bf G})_\theta$ is equivalent to the right $A_\theta$-modules.
Indeed, if we set $ \mathscr{I}_{\theta}=\displaystyle\bigoplus_{J\subset I(\theta)}\nabla(\theta)_J$, then $A_\theta\cong \op{End}_{\mathscr{O}({\bf G})}(\mathscr{I}_{\theta})$. The functor $\op{Hom}_{\bf G}(-, \mathscr{I}_{\theta})^*$ form $\mathscr{O}({\bf G})_\theta$ to the right $A_\theta$-modules is an equivalence of categories. Therefore we also see that  the category $\mathscr{O}({\bf G})$ is a Krull-Schmidt category.

\medskip

Now we want to understand the structure of the algebra $A_\theta$, we have
$$A_\theta\cong \op{End}_{\mathscr{O}({\bf G})}(\mathscr{I}_{\theta})\cong \bigoplus_{J\subset I(\theta)}\op{Hom}_{\mathscr{O}({\bf G})}(\nabla(\theta)_ J, \mathscr{I}_{\theta} ).$$
The composition factors of  $\nabla(\theta)_ J= \mathbb{M}(\theta, J')$ with $J'=I(\theta)\backslash J$ are given in Proposition \ref{Parabolic}, therefore
$$
\aligned  \op{Hom}_{\mathscr{O}({\bf G})}(\nabla(\theta)_ J, \mathscr{I}_{\theta} )
 = &\  \op{Hom}_{\mathscr{O}({\bf G})}(\nabla(\theta)_ J, \bigoplus_{K\subset I(\theta) }\nabla(\theta)_ K)\\
= &\  \bigoplus_{K\subset J}\op{Hom}_{\mathscr{O}({\bf G})}(\nabla(\theta)_ J, \nabla(\theta)_ K)\\
\cong &\ \bigoplus_{K\subset J} \op{Hom}_{{\bf P}_{J'}}({\bf 1}_{\theta, J'} ,\mathbb{M}(\theta, K')).
\endaligned
$$
 So if let $\varphi_{K\subset J}$ be a $\Bbbk {\bf G}$-module morphism such that $\varphi_{K\subset J} ({\bf 1}_{\theta, J'})= {\bf 1}_{\theta, K'}$, then we have
 $$\op{Hom}_{\mathscr{O}({\bf G})}(\nabla(\theta)_ J, \nabla(\theta)_ K) \cong \Bbbk \varphi_{K\subset J}.$$ Now the $\Bbbk$-algebra $A_\theta$ has a
 $\Bbbk$ -basis $\{\varphi_{K\subset J}\mid K\subset J \subset I(\theta)\}$
and the multiplications are given by
$$\varphi_{K\subset M} \varphi_{L\subset J} =\left\{
\begin{array}{ll}
\varphi_{K\subset J} &\ \mbox{if}~L=M,\\
0 &\ \mbox{otherwise.}
\end{array}\right.$$

\begin{Cor} If $|I(\lambda)|=|I(\mu)|$, then $A_\lambda \cong A_\mu$ as $\Bbbk$-algebras and
thus $\mathscr{O}({\bf G})_\lambda$ is equivalent to  $\mathscr{O}({\bf G})_\mu$.
\end{Cor}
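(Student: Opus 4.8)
The plan is to derive this corollary directly from the explicit presentation of $A_\theta$ just obtained. The key observation is that the description of $A_\theta$ in the paragraph above depends on $\theta$ \emph{only through the finite index set} $I(\theta)$: the algebra has $\Bbbk$-basis $\{\varphi_{K\subset J}\mid K\subset J\subset I(\theta)\}$, indexed by nested pairs of subsets of $I(\theta)$, with the multiplication rule $\varphi_{K\subset M}\varphi_{L\subset J}=\delta_{L,M}\,\varphi_{K\subset J}$. So if $|I(\lambda)|=|I(\mu)|=n$, I would fix a bijection $\sigma\colon I(\lambda)\xrightarrow{\sim} I(\mu)$; it induces a bijection between the index sets $\{K\subset J\subset I(\lambda)\}$ and $\{K\subset J\subset I(\mu)\}$, sending $(K\subset J)$ to $(\sigma(K)\subset\sigma(J))$, and this manifestly respects the nesting relation. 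Defining the $\Bbbk$-linear map $\Psi\colon A_\lambda\to A_\mu$ by $\Psi(\varphi_{K\subset J})=\varphi_{\sigma(K)\subset\sigma(J)}$ on basis elements, one checks on basis elements that $\Psi(\varphi_{K\subset M}\varphi_{L\subset J})=\Psi(\varphi_{K\subset M})\Psi(\varphi_{L\subset J})$ by comparing the two cases $L=M$ and $L\neq M$ (using that $\sigma$ is injective, so $L=M\iff\sigma(L)=\sigma(M)$), and that $\Psi$ sends the identity $\sum_{J\subset I(\lambda)}\varphi_{J\subset J}$ to $\sum_{J'\subset I(\mu)}\varphi_{J'\subset J'}$. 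Hence $\Psi$ is an algebra isomorphism, and its inverse is induced by $\sigma^{-1}$.

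Once $A_\lambda\cong A_\mu$ as $\Bbbk$-algebras is established, the equivalence of categories follows formally: by the discussion preceding the corollary, the functor $\op{Hom}_{\bf G}(-,\mathscr{I}_\theta)^*$ is an equivalence between $\mathscr{O}({\bf G})_\theta$ and the category of right $A_\theta$-modules, so $\mathscr{O}({\bf G})_\lambda \simeq \mathrm{mod}\text{-}A_\lambda \simeq \mathrm{mod}\text{-}A_\mu \simeq \mathscr{O}({\bf G})_\mu$, where the middle equivalence is restriction of scalars along the algebra isomorphism $\Psi$.

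I do not expect any serious obstacle here; this is essentially a bookkeeping argument, and the only thing to be careful about is to verify the multiplication rule transports correctly under $\sigma$, which hinges on nothing more than $\sigma$ being a bijection. If one wanted to be slightly more conceptual, one could instead note that $A_\theta$ is isomorphic to the incidence-type algebra of the Boolean lattice on $I(\theta)$ with the stated (non-standard, "telescoping") product, and any two Boolean lattices of the same cardinality are isomorphic as posets; but the hands-on basis argument above is shortest and self-contained given what has already been proved. In fact this is exactly the content anticipated by the later remark that $A_\theta$ is isomorphic to the bound quiver algebra $\mathscr{A}_n$ whenever $|I(\theta)|=n$.
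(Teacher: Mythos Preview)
Your proposal is correct and is precisely the intended argument: the paper states this corollary immediately after giving the explicit basis-and-multiplication presentation of $A_\theta$, without a separate proof, because it follows at once that $A_\theta$ depends on $\theta$ only through the finite set $I(\theta)$. Your bijection-of-index-sets argument just spells out this observation carefully, and your deduction of the categorical equivalence via $\mathrm{mod}\text{-}A_\theta$ is exactly what the paper has in mind.
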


For each $\theta\in \widehat{\bf T}$, the right $A_\theta$-modules has enough projectives. Then $\mathscr{O}({\bf G})_\theta$ and hence $\mathscr{O}({\bf G})$ also have enough projectives. Moreover we have the following proposition.

\begin{Prop}\label{enough projectives} For $\theta\in \widehat{\bf T}$ and $J\subset I(\theta)$, $\Delta(\theta)_J$ is the projective cover of $E(\theta)_J$.
\end{Prop}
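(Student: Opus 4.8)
The plan is to descend to the block $\mathscr{O}({\bf G})_\theta$ (which contains $\Delta(\theta)_J$) and to use the equivalence $G:=\op{Hom}_{\bf G}(-,\mathscr{I}_\theta)^{*}$ from $\mathscr{O}({\bf G})_\theta$ to the category of right $A_\theta$-modules recorded above, identifying $\Delta(\theta)_J$ with the indecomposable projective right $A_\theta$-module $e_JA_\theta$, where $e_J:=\varphi_{J\subset J}$. First I would read off the structure of $A_\theta$ from its $\Bbbk$-basis $\{\varphi_{K\subset J}\mid K\subseteq J\subseteq I(\theta)\}$ and the displayed multiplication rule: the $e_J=\varphi_{J\subset J}$, $J\subseteq I(\theta)$, are pairwise orthogonal primitive idempotents summing to $1$ (indeed $A_\theta$ is the incidence algebra of the Boolean lattice $(\mathcal P(I(\theta)),\subseteq)$, with $\varphi_{K\subset J}$ the matrix unit $e_{K,J}$), so the indecomposable projective right modules are $P_J:=e_JA_\theta$, which has $\Bbbk$-basis $\{\varphi_{J\subset M}\mid J\subseteq M\subseteq I(\theta)\}$, simple top $S_J=P_J/\op{rad}P_J$, and — by the filtration $F_k=\sum_{|M|\ge k}\varphi_{J\subset M}A_\theta$ — composition factors the simples $S_M$ $(J\subseteq M\subseteq I(\theta))$, each with multiplicity one; in particular the composition length of $P_J$ is $2^{|I(\theta)|-|J|}$.

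Next I would pin down which simple $S_M$ corresponds to $E(\theta)_J$. Since $e_K=\varphi_{K\subset K}$ is the identity endomorphism of the summand $\nabla(\theta)_K$ of $\mathscr{I}_\theta$ (and annihilates the other summands), while $\op{Hom}_{\bf G}(E(\theta)_M,\nabla(\theta)_K)\cong\op{Hom}_{\bf G}(E(\theta)_M,\op{soc}\nabla(\theta)_K)=\delta_{MK}\Bbbk$ because $\op{soc}\nabla(\theta)_K=E(\theta)_K$, the one-dimensional module $G(E(\theta)_M)=\op{Hom}_{\bf G}(E(\theta)_M,\mathscr{I}_\theta)^{*}$ is acted on by $e_M$ as the identity and by every other $e_K$ as zero; that is, $G(E(\theta)_M)\cong S_M$. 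Consequently $G^{-1}(P_J)$ is precisely the projective cover of $E(\theta)_J$ in $\mathscr{O}({\bf G})_\theta$, hence in $\mathscr{O}({\bf G})$ by the block decomposition, and it has composition factors exactly the $E(\theta)_M$ with $J\subseteq M\subseteq I(\theta)$, each once.

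It then remains to identify $\Delta(\theta)_J$ with $G^{-1}(P_J)$, and for this I would invoke the structure of $\mathbb{M}(\theta)$ from \cite{CD3}: $\Delta(\theta)_J=\Bbbk{\bf G}\eta(\theta)_J$ is cyclic, its composition factors are exactly the $E(\theta)_M$ with $J\subseteq M\subseteq I(\theta)$ (each once), and $\Delta(\theta)_J'$ is its unique maximal submodule, so its head is the simple $E(\theta)_J$. A module with simple top is a quotient of the projective cover of that top, so there is a surjection $G^{-1}(P_J)\twoheadrightarrow\Delta(\theta)_J$; by the two computations above its source and target have the same finite composition length $2^{|I(\theta)|-|J|}$, so the surjection is an isomorphism. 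Hence $\Delta(\theta)_J\cong G^{-1}(P_J)$ is the projective cover of $E(\theta)_J$.

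All of this is formal once the explicit description of $A_\theta$ is granted; the one substantive input is the structure of $\Delta(\theta)_J$ quoted from \cite{CD3}, and within it the fact that $\Delta(\theta)_J$ has simple head $E(\theta)_J$ — equivalently, that $\op{Hom}_{\bf G}(\Delta(\theta)_J,E(\theta)_M)=0$ for every $M\supsetneq J$. I expect verifying this last vanishing — from the explicit basis $\{u\dot wC(\theta)_M\mid w\in Z_M,\ u\in{\bf U}_{w_Mw^{-1}}\}$ of $E(\theta)_M$ together with the relations $\dot s\,\eta(\theta)_J=-\eta(\theta)_J$ $(s\in J)$ and $t\,\eta(\theta)_J=\theta(t)\eta(\theta)_J$ satisfied by the generator — to be the only genuinely computational point; everything else follows from bookkeeping with the idempotents $e_J$.
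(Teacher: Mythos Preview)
Your argument is correct and uses the same overarching strategy as the paper: pass through the equivalence $G=\op{Hom}_{\bf G}(-,\mathscr I_\theta)^{*}$ to right $A_\theta$-modules and identify $\Delta(\theta)_J$ with the indecomposable projective $e_JA_\theta$. The execution differs, though. The paper computes $G(\Delta(\theta)_J)$ directly: it writes down the basis $\{f_{K\supset J}:\eta(\theta)_J\mapsto {\bf 1}_{\theta,K'}\}_{K\supseteq J}$ of $\op{Hom}_{\bf G}(\Delta(\theta)_J,\mathscr I_\theta)$, computes the left $A_\theta$-action $\varphi_{L\subset M}\cdot f_{K\supset J}$, dualises, and reads off the isomorphism with $\varphi_JA_\theta$. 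You instead first pin down $G(E(\theta)_M)\cong S_M$ from $\op{soc}\nabla(\theta)_K=E(\theta)_K$, and then invoke the simple-head property of $\Delta(\theta)_J$ together with a length count to force $\Delta(\theta)_J\cong G^{-1}(P_J)$.

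What each buys: the paper's route is self-contained once one knows $\dim\op{Hom}_{\bf G}(\Delta(\theta)_J,\nabla(\theta)_K)=1$ for $K\supseteq J$ (which follows immediately from injectivity of $\nabla(\theta)_K$ and the multiplicity-one statement for $\Delta(\theta)_J$), and never needs to know that $\Delta(\theta)_J'$ is the \emph{unique} maximal submodule. Your route is more conceptual and makes the correspondence $E(\theta)_M\leftrightarrow S_M$ explicit, but it leans on the simple-head fact, which you rightly flag as the only real computation. That fact is true and can be checked as you outline; alternatively, once you have established the identification of simples, the paper's one-dimensionality computation $\dim\op{Hom}_{\bf G}(\Delta(\theta)_J,\nabla(\theta)_K)=\delta_{J\subseteq K}$ already yields it, since then $\dim G(\Delta(\theta)_J)=2^{|I(\theta)|-|J|}$ equals the length of $\Delta(\theta)_J$, forcing $G(\Delta(\theta)_J)$ to be the projective $P_J$ without ever mentioning the head.
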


\begin{proof} The functor $\op{Hom}_{\bf G}(-, \mathscr{I}_{\theta})^*$ form $\mathscr{O}({\bf G})_\theta$
to the right $A_\theta$-modules  keeps the projectives.
So it is enough to show that $\op{Hom}_{\bf G}(\Delta(\theta)_J , \mathscr{I}_{\theta})^*$ is a projective right $A_\theta$-module.

We  denote $\varphi_J:=\varphi_{K\subset J}$ when $J=K$. As a right $A_\theta$-module, $A_\theta$
has a decomposition $A_\theta=\displaystyle \bigoplus_{J\subset I(\theta)} \varphi_J A_\theta$ and each $\varphi_J A_\theta$ is indecomposable projective. In the following we will show that $\op{Hom}_{\bf G}(\Delta(\theta)_J , \mathscr{I}_{\theta})^* \cong \varphi_J A_\theta$.

All the composition factors of $\Delta(\theta)_J$ are $E(\theta)_K$ with $J\subset K \subset I(\theta)$. Thus
$$
\aligned \op{Hom}_{\bf G}(\Delta(\theta)_J , \mathscr{I}_{\theta})&\
 =  \bigoplus_{K\subset I(\theta)}\op{Hom}_{\bf G}(\Delta(\theta)_J , \nabla(\theta)_K)\\
&\ = \bigoplus_{J\subset K\subset I(\theta)}\op{Hom}_{\bf G}(\Delta(\theta)_J , \nabla(\theta)_K).
\endaligned
$$
Let $f_{K \supset J}$ be the $\Bbbk {\bf G}$-module morphism such that
$f_{K \supset J}(\eta(\theta)_J)= {\bf 1}_{\theta, K'},$ where $K'=I(\theta)\backslash K$. Then we get
$$\op{Hom}_{\bf G}(\Delta(\theta)_J , \nabla(\theta)_K)\cong \Bbbk f_{K \supset J}.$$ Thus $\{f_{K \supset J} \mid J\subset K\subset I(\theta)\}$ is a basis of $\op{Hom}_{\bf G}(\Delta(\theta)_J , \mathscr{I}_{\theta})$.
The operation of $A_\theta$ on this basis is:
$$\varphi_{L\subset M} f_{K\supset J} =\left\{
\begin{array}{ll}
f_{L\supset J} &\ \mbox{if}~K=M \ \text{and}\  L\supset J, \\
0 &\ \mbox{otherwise.}
\end{array}\right.$$

Let $\widetilde{f}_{K\supset J}$ be the dual basis of $f_{K\supset J}$ and set $\widetilde{f}_{J}=\widetilde{f}_{K\supset J}$ when $J=K$. It is easy to check that the homomorphism
send $\widetilde{f}_{J}$ to $\varphi_J$ induces an isomorphism
$$\op{Hom}_{\bf G}(\Delta(\theta)_J , \mathscr{I}_{\theta})^* \cong \varphi_J A_\theta$$ as right $A_\theta$-modules.
Therefore  $\Delta(\theta)_J$ is the projective cover of $E(\theta)_J$ and the proposition is proved.
\end{proof}

\section{The Algebras $\mathscr{A}_n$}
Assume that $X$ is a finite set with  cardinality $|X|=n\geq 1$.
Denote by $M_{2^n}(\Bbbk)$ the matrix algebra over $\Bbbk$. The rows and columns of a matrix in this algebra are indexed by the subsets of $X$.
Fix an order of the subsets of $X$, we let $$\mathscr{A}_n= \{ (a_{Y,Z})\in M_{2^n}(\Bbbk) \mid a_{Y,Z}=0 \ \text{if}~Y ~\text{is not a subset of } Z \}$$
which is a subalgebra of the matrix algebra $ M_{2^n}(\Bbbk)$ whose rows and columns are indexed by the subsets of $X$.  The algebra $\mathscr{A}_n$ just depends on the cardinality of $X$.
Let $\{e_{Y,Z} \mid Y\subset Z\subset X\}$ be the standard basis of  $\mathscr{A}_n$. Without lost of generality, we can assume each element in $\mathscr{A}_n$ has the form of a upper triangular matrix. The Jacobson radical of $\mathscr{A}_n$ is
$\text{Rad}(\mathscr{A}_n)=\displaystyle \sum_{Y\varsubsetneq Z}\Bbbk e_{Y,Z}$. As a right $\mathscr{A}_n$-module, $\mathscr{A}_n$ has a decomposition
$$\mathscr{A}_n= \bigoplus_{Y\subset X}e_{Y,Y} \mathscr{A}_n$$
and for each $Y\subset X$, $e_{Y,Y} \mathscr{A}_n$ is a indecomposable projective module.
For each integer $n$, it is not difficult to see that $\mathscr{A}_n$
is a basic and connected algebra.
Therefore there exists a bound quiver $(\mathcal{Q}, \mathcal{I})$ such that $\Bbbk \mathcal{Q} /\mathcal{I} \cong \mathscr{A}_n $ (see \cite[Chapter II]{ASS}).

As a matter of fact, we can construct the bound quiver $(\mathcal{Q}, \mathcal{I})$  associate to $X$ with $|X|=n$. The vertices $Q_0$ of the quiver  $\mathcal{Q}=(Q_0, Q_1, s,t)$  is indexed by the subsets of $X$ and we  denote it  by $Q_0=\{i_Y\mid Y\subset X\}$. For two vertices $i_Y, i_Z \in Q_0$, there exists an edge $\alpha \in Q_1$ between them if  $Y\subset Z$ with $|Z\backslash Y|=1$ and the orientation is given by $s(\alpha)=i_Y$, $t(\alpha)=i_Z$. We denote such arrow by $\alpha_{Y,Z}$.
The admissible ideal $\mathcal{I}$ in $\Bbbk \mathcal{Q}$ is generated by all elements $\omega_1-\omega_2$
given by the pairs $\{\omega_1,\omega_2\}$ of paths in $\mathcal{Q}$ having the same starting and ending vertices.
Then we have $\Bbbk \mathcal{Q} /\mathcal{I} \cong \mathscr{A}_n $. It is not difficult to see that the algebra $A_\theta$ is isomorphic to the  algebra $\mathscr{A}_n$ with $n=|I(\theta)|$.

\medskip

For $n=1$, the algebra $\mathscr{A}_1$ is the path algebra of the Dynkin quiver of type $A_2$. The number of isomorphism classes of indecomposable representations of $\mathscr{A}_1$ is $3$. By the correspondence of
$\mathscr{O}({\bf G})_\theta$ and the right $A_\theta$-modules under the assumption $\Bbbk= \mathbb{C}$, then for ${\bf G}=SL_2$, the indecomposable modules in $\mathscr{O}({\bf G})$ is $\mathbb{M}(\op{tr}), \op{St}, \Bbbk_{\op{tr}}$ and $\{\mathbb{M}(\theta)\mid \theta \in \widehat{\bf T} \ \text{nontrivial}\}$. Therefore each module in $\mathscr{O}({\bf G})$ is a direct sum of these modules.

\medskip

For $n=2$, the incidence algebra $\mathscr{A}_2$ is isomorphic the algebra given by  the following quiver
\begin{center}
\begin{tikzpicture}[scale=1]
\draw  (4,0) node (I11) {$a$} +(1.2,0.6) node (I12) {$b$} +(1.2,-0.6) node (I21) {$c$} + (2.4,0) node (I22) {$d$};
\draw[->] (I11)--(I12) node[pos=.5,above] {$\alpha$};
\draw[->] (I12)--(I22) node[pos=.5,above] {$\beta$};
\draw[->] (I11)--(I21) node[pos=.5,above] {$\gamma$};
\draw[->] (I21)--(I22) node[pos=.5,above] {$\delta$};
\end{tikzpicture}
\end{center}
bound by the relation $\beta \alpha= \delta \gamma$. Therefore the  representations of the algebra $\mathscr{A}_2$ is given  by the following diagram

\begin{center}
\begin{tikzpicture}[scale=1]
\draw  (4,0) node (I11) {$V_a$} +(2,1) node (I12) {$V_b$} +(2,-1) node (I21) {$V_c$} + (4,0) node (I22) {$V_d$};
\draw[->] (I11)--(I12) node[pos=.5,above] {$f_{ab}$};
\draw[->] (I12)--(I22) node[pos=.5,above] {$f_{bd}$};
\draw[->] (I11)--(I21) node[pos=.5,above] {$f_{ac}$};
\draw[->] (I21)--(I22) node[pos=.5,above] {$f_{cd}$};
\end{tikzpicture}
\end{center}
such that $V_a$, $V_b$, $V_c$, $V_d$ are vector spaces and $f_{ab}$, $f_{bd}$, $f_{ac}$, $f_{cd}$ are linear morphisms which satisfy $f_{bd} f_{ab}= f_{cd}f_{ac}$. The Auslander-Reiten quiver of the algebra $\mathscr{A}_2$ was known in the example of Chap VII.2 of  the book \cite{ARS}. Thus the algebra $\mathscr{A}_2$  is of finite type and there are 11 indecomposable $\mathscr{A}_2$-modules up to isomorphism. For an given representation $\mathbb{V}$ of $\mathscr{A}_2$, the dimension vector of  $\mathbb{V}$ is denoted by  $$\underline{\text{dim}}\mathbb{V}= (\text{dim}V_a, \text{dim}V_b, \text{dim}V_c, \text{dim}V_d ).$$
Hence all the indecomposable $\mathscr{A}_2$-modules (up to isomorphism) are replaced by their dimension vectors which are the following:
$$
\aligned
&\ (1,0,0,0),\ (0,1,0,0),\ (0,0,1,0),\ (0,0,0,1),\ (1,1,0,0),\ (1,0,1,0), \\
&\ (0,1,0,1),\ (0,0,1,1),\ (1,1,1,0),\ (0,1,1,1),\ (1,1,1,1).
\endaligned
$$

Now we consider the category $\mathscr{O}({\bf G})_\theta$ when $|I(\theta)|=2$. Assume $I(\theta)=\{r,s\}$. By the equivalence of  $\mathscr{O}({\bf G})_\theta$  and the right $\mathscr{A}_2$-modules, we know that except the four irreducible modules $E(\theta)_{\emptyset}, E(\theta)_{r},E(\theta)_{s}, E(\theta)_{\{r,s\}}$, the indecomposable projective modules $\mathbb{M}(\theta), \Delta(\theta)_{r}, \Delta(\theta)_{s}, \Delta(\theta)_{\{r,s\}}= E(\theta)_{\{r,s\}}$ and the indecomposable injective modules $\nabla(\theta)_{\{r, s\}}=E(\theta)_{\emptyset}, \nabla(\theta)_{r}, \nabla(\theta)_{s}, \mathbb{M}(\theta)$, the remaining indecomposable modules in $\mathscr{O}({\bf G})_\theta$ is $\mathbb{M}(\theta)/\Delta(\theta)_{\{r,s\}}$ and $\Delta(\theta)_{r}+\Delta(\theta)_{s}$.
Therefore all the indecomposable modules of $\mathscr{O}({\bf G})$ are known when the rank of ${\bf G}$ is 2.

\medskip

 Before we consider the algebra $\mathscr{A}_n$ for $n\geq 3$, we recall some facts and results about the Tits form of an algebra. For the algebra $\mathscr{A}\cong \Bbbk \mathcal{Q} /\mathcal{I}$, let $R$ be the minimal set of relations which generate the ideal $\mathcal{I}$. Then the Tits form of $\mathscr{A}$ is the integral quadratic
form $q_{\mathscr{A}}: \mathbb{Z}^{m} \rightarrow \mathbb{Z}$ defined by the formula
$$q_{\mathscr{A}}(x)=\sum_{i\in Q_0} x^2_i- \sum_{\alpha\in Q_1} x_{s(\alpha)}x_{t(\alpha)}+ \sum_{i,j\in Q_0}r_{ij}x_i x_j,$$
where $r_{ij}$ is the cardinality of $R\cap \Bbbk Q(i, j)$ and $\Bbbk Q(i, j)$ is the vector space
spanned by the paths from $i$ to $j$ (see \cite[Page 464]{B}). It is well know that if $\mathscr{A}$ is a tame algebra, the Tits form $q_{\mathscr{A}}$ is weakly positive, that is $q_{\mathscr{A}}(x)\geq 0$ for any $x\in \mathbb{Z}^{m}$ with nonnegative coordinates (see \cite[Section 1.3]{P}).

In our setting, the minimal set of relations $R$ which generate the ideal $\mathcal{I}$ contains all the relations
$$\alpha_{Y,U}\alpha_{U,Z}=\alpha_{Y,V}\alpha_{V,Z}$$ with $\alpha_{Y,U},\alpha_{U,Z},\alpha_{Y,V}, \alpha_{V,Z}$ are arrows in $Q_1$. Thus for $i_Y, i_Z \in  Q_0$, we have
$$r_{i_Y ,i_Z}=\left\{
\begin{array}{ll}
1 &\ \mbox{if}~Y\subset Z~\mbox{with}~ |Z\backslash Y|=2 ,\\
0 &\ \mbox{otherwise.}
\end{array}\right.$$
Therefore the Tits form of $\mathscr{A}_n$ is
$$q_n(x)=\sum_{Y} x^2_Y- \sum_{Y\subset Z,\ |Z\backslash Y|=1} x_{Y}x_{Z}+ \sum_{Y\subset Z, \ |Z\backslash Y|=2 }x_Y x_Z,$$
where $Y,Z$ are subsets of a fixed set $X$ such that $ |X|=n$.

\medskip
We consider the Tits form of the incidence algebra $\mathscr{A}_3$ which is  given by  the following quiver
\begin{center}
\begin{tikzpicture}[scale=1]
\draw  (4,0) node (I1) {$1$} +(2,1) node (I2) {$2$} +(2,0) node (I3) {$3$} + (2,-1) node (I4) {$4$}+(4,1) node (I5) {$5$}+(4,0) node (I6) {$6$}+(4,-1) node (I7) {$7$}+(6,0) node (I8) {$8$};
\draw[->] (I1)--(I2) node[pos=.5,above] {};
\draw[->] (I1)--(I3) node[pos=.5,above] {};
\draw[->] (I1)--(I4) node[pos=.5,above] {};
\draw[->] (I2)--(I5) node[pos=.5,above] {};
\draw[->] (I2)--(I6) node[pos=.5,above] {};
\draw[->] (I3)--(I5) node[pos=.5,above] {};
\draw[->] (I3)--(I7) node[pos=.5,above] {};
\draw[->] (I4)--(I6) node[pos=.5,above] {};
\draw[->] (I4)--(I7) node[pos=.5,above] {};
\draw[->] (I5)--(I8) node[pos=.5,above] {};
\draw[->] (I6)--(I8) node[pos=.5,above] {};
\draw[->] (I7)--(I8) node[pos=.5,above] {};
\end{tikzpicture}
\end{center}
bounded by six relations, where each one is given by  $\omega_1=\omega_2$ of paths having the same starting and ending vertices of a parallelogram like the case of $\mathscr{A}_2$.
Now the Tits form is given by
$$
\aligned  q(x)
 = &\  x^2_1+x^2_2+x^2_3+x^2_4+x^2_5+x^2_6+x^2_7+x^2_8+x_1x_5+x_1x_6+x_1x_7\\
&\ +x_2x_8+x_3x_8+x_4x_8-x_1x_2-x_1x_3-x_1x_4-x_2x_5-x_2x_6 \\
&\ -x_3x_5-x_3x_7-x_4x_6-x_4x_7-x_5x_8-x_6x_8-x_7x_8.
\endaligned
$$
Do variable substitution with $x_1=x+z, x_8=z, y_1=x_2-x_5, y_2=x_2-x_6, y_3=x_3-x_5, y_4=x_3-x_7, y_5=x_4-x_6,y_6=x_4-x_7$, then we have
$$
\aligned  q(x)
 = &\  x^2+2xz+2z^2-\frac{1}{2}x (y_1+y_2+y_3+y_4+y_5+y_6)\\
&\ + \frac{1}{2}(y^2_1+y^2_2+y^2_3+y^2_4+y^2_5+y^2_6)
\endaligned
$$
which is nonnegative for any $x,z,y_i\in \mathbb{Z}$.  Thus the Tits form of $\mathscr{A}_3$ is  weakly positive.

In the following we use a basic method to show that $\mathscr{A}_3$ is a tame algebra (I do not know whether there is a criterion to get this property by the weakly positive Tits form).
We classify any indecomposable representation $\mathbb{V}=(V_i, f_{\alpha})$ of $\mathscr{A}_3$ to the following three cases by considering the morphism $f_{18}:V_1\rightarrow V_8$:
(a) When $V_1=V_8=0$, then $\mathbb{V}$ can be regarded as a representation of the Euclidean quiver of type $\widehat{A}_5$. (b) When $V_1\ne 0$ and $V_8\ne 0$, in this case $\text{dim}V_1=\text{dim}V_8=1$ and the only indecomposable module is the projective module $P(1)$. (c) When  $V_1=0$ or $V_8=0$, these two situations are symmetric and we consider the case $V_1=0$ and $V_8\ne 0$. Therefore the linear morphisms $f_{25}$,$f_{26}$,$f_{35}$,$f_{37}$,$f_{46}$,$f_{47}$,$f_{58}$,$f_{68}$,$f_{78}$ are all injective.
Since $\mathbb{V}$ is indecomposable, we also have $\text{dim}V_i \leq 1$ for $2\leq i \leq 4$ and $\text{dim}V_j \leq 2$ for $2\leq j \leq 4$ . Therefore there are finitely many indecomposable representations $\mathbb{V}=(V_i, f_{\alpha})$ such that $V_1=0$ and $V_8\ne 0$.

\medskip
Now consider the  incidence algebra $\mathscr{A}_4$. For the vertex $i_Y\in Q_0$,
we set
$$x_{Y}=\left\{
\begin{array}{ll}
0 &\ \mbox{when}~|Y|=0~\mbox{or}~4 ,\\
1 &\ \mbox{when}~|Y|=1~\mbox{or}~3 ,\\
2 &\ \mbox{when}~|Y|=2,
\end{array}\right.$$
for any vertex $i_Y\in Q_0$ in the Tits form of $\mathscr{A}_4$. Then by a direct calculation we have $q(x)=-4$. This example also implies that the Tits form of $\mathscr{A}_n$ is not weakly positive when $n\geq 4$. Then we have the following proposition.
\begin{Prop}\label{wild} The  algebra $\mathscr{A}_n$ is of wild type when $n\geq 4$.
\end{Prop}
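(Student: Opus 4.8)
The plan is to show that $\mathscr{A}_n$ is wild for $n \geq 4$ by reducing to the case $n = 4$ and then invoking the negative-definiteness witness on the Tits form together with a standard criterion from representation theory. Since $\mathscr{A}_{n}$ for $n > 4$ contains $\mathscr{A}_4$ as a "corner" subalgebra — concretely, fix a $4$-element subset $X_0 \subset X$ and consider the idempotent $\epsilon = \sum_{Y \subset X_0} e_{Y,Y}$; then $\epsilon \mathscr{A}_n \epsilon \cong \mathscr{A}_4$ — it suffices to treat $n = 4$, because wildness of $\epsilon \mathscr{A} \epsilon$ implies wildness of $\mathscr{A}$ (representations of the corner algebra embed fully faithfully into representations of the big algebra, so the wild behaviour is inherited). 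I would state this reduction first and dispose of it in a sentence or two.

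For $n = 4$, the key input is already computed in the excerpt: the vector $x$ with $x_Y = 0$ for $|Y| \in \{0,4\}$, $x_Y = 1$ for $|Y| \in \{1,3\}$, and $x_Y = 2$ for $|Y| = 2$ gives $q_4(x) = -4 < 0$, so the Tits form $q_{\mathscr{A}_4}$ is \emph{not} weakly positive. Now I would invoke the contrapositive of the fact quoted earlier in Section 4 (from \cite{P}, \cite{B}): if $\mathscr{A}$ is a tame algebra then $q_{\mathscr{A}}$ is weakly positive. Hence $\mathscr{A}_4$ is not tame. To upgrade "not tame" to "wild" I would appeal to the Drozd tame–wild dichotomy: every finite-dimensional algebra over an algebraically closed field is either tame or wild, and the two are mutually exclusive; since $\mathscr{A}_4$ is finite-dimensional over $\Bbbk$ (algebraically closed by our standing hypothesis) and not tame, it must be wild. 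Combining with the reduction above yields the proposition for all $n \geq 4$.

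Alternatively — and perhaps more robustly, in case one wants to avoid leaning on the full dichotomy — I would exhibit an explicit wild subquotient. A clean route: localise at the idempotent $\epsilon$ supported on the subsets $Y \subset X_0$ of sizes $1$, $2$, $3$ (that is, kill $V_\varnothing$ and $V_{X_0}$). The resulting algebra is the incidence algebra of the "middle layers" of the Boolean lattice $B_4$, which contains as a full subcategory the representations of a poset/quiver with a vertex of in-degree (or out-degree) at least three feeding through a commutativity-bound middle layer; one then pins down a $(1,1,\ldots)$-type subspace configuration problem equivalent to the four-subspace-with-a-map problem, whose underlying form is the negative-definite one witnessed by $q_4(x) = -4$. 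Identifying this configuration problem with a known wild one (e.g. via Brenner's list of wild subspace problems, or by producing a controlled representation embedding of $\Bbbk\langle x,y\rangle$-modules) finishes it.

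\textbf{The main obstacle} is the passage from the failure of weak positivity of the Tits form to genuine wildness: the quoted result only gives the implication "tame $\Rightarrow$ weakly positive," so strictly one needs either the tame–wild dichotomy or an explicit wild-embedding argument to conclude. I expect the paper to take the quick route via the dichotomy; if a self-contained argument is wanted, the explicit identification of the middle-layer subalgebra with a classical wild subspace problem (making the representation embedding precise, and checking it is a strict embedding reflecting indecomposables and isomorphism classes) is the step that requires real care rather than routine computation. Everything else — the corner-subalgebra reduction for $n > 4$ and the arithmetic $q_4(x) = -4$ — is immediate from what is already in Section 4.
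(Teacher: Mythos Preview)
Your proposal is correct and follows essentially the same route as the paper: compute $q_4(x)=-4$ on the indicated vector, conclude the Tits form is not weakly positive, and hence (via ``tame $\Rightarrow$ weakly positive'' from \cite{P}) that $\mathscr{A}_n$ is not tame, so wild. The paper is in fact terser than you: it gives no formal proof environment, does not name Drozd's dichotomy explicitly, and handles the reduction $n\geq 4$ not via a corner-idempotent argument but simply by the remark that the same vector (extended by zeros on subsets not contained in a fixed $4$-element $X_0\subset X$) already witnesses $q_n(x)=-4$, since every cross-term involving such a subset vanishes. Your corner-subalgebra reduction is a valid and more structural alternative, and your explicit invocation of the tame--wild dichotomy fills a step the paper leaves implicit; your ``alternative'' explicit wild-embedding sketch goes well beyond what the paper attempts.
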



\bibliographystyle{amsplain}

\end{document}